\pgfplotsset{compat=1.15}
\newtheorem{thm}{Theorem}
\newtheorem{cor}[thm]{Corollary}
\newtheorem{lem}[thm]{Lemma}
\newtheorem{prop}[thm]{Proposition}
\newtheorem{ques}[thm]{Question}
\newenvironment{cor*}[2][Corollary]{\begin{trivlist}
		\item[\hskip \labelsep {\bfseries #1}\hskip \labelsep {\bfseries #2.}]}{\end{trivlist}}
\newenvironment{prop*}[2][Proposition]{\begin{trivlist}
		\item[\hskip \labelsep {\bfseries #1}\hskip \labelsep {\bfseries #2.}]}{\end{trivlist}}
\theoremstyle{definition}
\newtheorem{defn}[thm]{Definition}
\newtheorem*{defn*}{Definition}
\theoremstyle{remark}
\newtheorem{rmk}[thm]{Remark}
\newtheorem*{ack}{Acknowledgment}
\newcommand{\1}{\mathbf{1}}
\newcommand{\As}{\mathscr{A}}
\newcommand{\Bs}{\mathscr{B}}
\newcommand{\C}{\mathbb{C}}
\newcommand{\Cf}{\mathfrak{C}}
\newcommand{\Fs}{\mathscr{F}}
\newcommand{\Ff}{\mathfrak{F}}
\newcommand{\N}{\mathbb{N}}
\newcommand{\R}{\mathbb{R}}
\newcommand{\Ss}{\mathscr{S}}
\newcommand{\Z}{\mathbb{Z}}
\newcommand{\eps}{\varepsilon}
\newcommand{\loc}{\mathrm{loc}}
\newcommand{\supp}{\operatorname{supp}}
\title{A Linear Operator bounded in all Besov but not in Triebel-Lizorkin Spaces}
\author[]{Liding Yao} 
\address{Department of Mathematics,
	The Ohio State University, Columbus, OH 43210} 
\email{yao.1015@osu.edu}
\subjclass[2020]{46E35 (primary) 46B70, 42B35 and 42B25 (secondary)}
\begin{document}

\begin{abstract}
    We construct a linear operator $T:\mathscr S'(\mathbb R^n)\to \mathscr S'(\mathbb R^n)$ such that $T:\mathscr B_{pq}^s(\mathbb R^n)\to\mathscr B_{pq}^s(\mathbb R^n)$ for all $0<p,q\le\infty$ and $s\in\mathbb R$, but $T(\mathscr F_{pq}^s(\mathbb R^n))\not\subset \mathscr F_{pq}^s(\mathbb R^n)$ unless $p=q$. As a result Triebel-Lizorkin spaces cannot be interpolated from Besov spaces unless $p=q$. In the appendix we purpose a question for the interpolation framework via structured Banach spaces.
\end{abstract}

\maketitle

\section{Introduction and Main Result}
It is well known that Besov spaces are real interpolation spaces to Triebel-Lizorkin spaces, since we have $(\Fs_{pq_0}^{s_0}(\R^n),\Fs_{pq_1}^{s_1}(\R^n))_{\theta,q}=\Bs_{pq}^{(1-\theta)s_0+\theta s_1}(\R^n)$ and $(\Fs_{\infty\infty}^{s_0}(\R^n),\Fs_{\infty\infty}^{s_1}(\R^n))_{\theta,q}=\Bs_{\infty q}^{(1-\theta)s_0+\theta s_1}(\R^n)$ for all $0<\theta<1$, $s_0\neq s_1$, $p\in(0,\infty)$ and $q_0,q_1,q\in(0,\infty]$. See e.g. \cite[Theorem~2.4.2]{TriebelTheoryOfFunctionSpacesI}. As a result if we have a linear operator that is bounded in all Triebel-Lizorkin spaces, then it is automatically bounded in all Besov spaces as well. 

In this paper we show that the converse is false.
\begin{thm}\label{Thm::TBdd}
    Let $(\phi_j)_{j=0}^\infty$ be a Littlewood-Paley family that defines the norms for Besov and Triebel-Lizorkin spaces (see \eqref{Eqn::BsNorm} \eqref{Eqn::TLNorm1} \eqref{Eqn::TLNorm2} below). Let $(y_j)_{j=1}^\infty\subset\R^n$ be a sequence such that $\inf_{j\neq k}|y_j-y_k|>0$. Set $\tau_{y_j}f(x):=f(x-y_j)$ and we define
    \begin{equation}\label{Eqn::DefT}
        Tf:=\sum_{j=1}^\infty\tau_{y_j}(\phi_j\ast f)=\sum_{j=1}^\infty(\phi_j\ast f)(\cdot-y_j).
    \end{equation}
\begin{enumerate}[(i)]
    \item\label{Item::TBdd::Ss} As a side result $T:\Ss'(\R^n)\to\Ss'(\R^n)$ is bounded linear if and only if there is a $N_0>0$ such that $|y_j|\le 2^{N_0j}$ for every $j\ge1$.
    \item\label{Item::TBdd::Bs} $T$ defines a bounded linear operator on Besov spaces $T:\Bs_{pq}^s(\R^n)\to\Bs_{pq}^s(\R^n)$ for all $s\in\R$ and $0<p,q\le\infty$.
    \item\label{Item::TBdd::Fs} However on Triebel-Lizorkin spaces $T(\Fs_{pq}^s(\R^n))\not\subset\Fs_{pq}^s(\R^n)$ whenever $p\neq q$.

\end{enumerate}
\end{thm}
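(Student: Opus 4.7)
The plan hinges on the Littlewood--Paley near-orthogonality $\phi_k\ast\phi_j=0$ whenever $|j-k|>c_0$ for some constant $c_0$ depending on $(\phi_j)$, which yields
\[
\phi_k\ast Tf=\sum_{|j-k|\le c_0}\tau_{y_j}(\phi_k\ast\phi_j\ast f).
\]
From this, part (ii) is routine: since translation is an $L^p$-isometry and $\|\phi_k\ast\phi_j\ast f\|_{L^p}\lesssim\|\phi_j\ast f\|_{L^p}$, taking the weighted $\ell^q$-norm in $k$ and using the shift-invariance of $\ell^q$ gives $\|Tf\|_{\Bs^s_{pq}}\lesssim\|f\|_{\Bs^s_{pq}}$. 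For part (i), the pairing $\langle Tf,\psi\rangle=\sum_j\langle\phi_j\ast f,\tau_{-y_j}\psi\rangle$ converges absolutely when $|y_j|\lesssim 2^{N_0 j}$, because $\phi_j\ast f$ grows at most polynomially in $j$ and in $|x|$ (Paley--Wiener plus finite distributional order of $f$), while Schwartz seminorms of $\tau_{-y_j}\psi$ are polynomially bounded in $|y_j|$; conversely, if no such $N_0$ exists, one constructs $f$ and $\psi$ along a subsequence of rapidly growing $y_{j_m}$ to force divergence.

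The main content is part (iii), for which I use modulated Schwartz atoms. Fix a Schwartz function $\eta$ whose Fourier transform is supported in a small ball around the origin, and set
\[
\zeta_k(x):=2^{-ks}\,e^{2\pi i\,2^k x_1}\eta(x).
\]
If the support radius of $\widehat\eta$ is small enough (after suitably placing the modulation frequency inside the $k$-th annulus), then $\widehat{\zeta_k}$ sits inside the $k$-th LP annulus and $\phi_k\ast\zeta_j=0$ for $j\ne k$; crucially, the modulation guarantees $2^{ks}|\zeta_k(x)|\equiv|\eta(x)|$ pointwise. I will use this family in two dual ways, one for $p<q$ and one for $p>q$.

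\emph{Case $p<q$.} Take $f_N:=\sum_{k=1}^N\zeta_k$, whose LP profiles all coincide at the origin, so $\bigl(\sum_{k=1}^N 2^{ksq}|\zeta_k(x)|^q\bigr)^{1/q}=N^{1/q}|\eta(x)|$ and thus $\|f_N\|_{\Fs^s_{pq}}\asymp N^{1/q}\|\eta\|_{L^p}$. Applying $T$ scatters the pieces to the well-separated sites $y_k$; since $\eta$ decays rapidly and $\inf_{j\ne k}|y_j-y_k|>0$, the shifted profiles $|\eta(\cdot-y_k)|$ are essentially disjoint, the pointwise $\ell^q$-sum collapses to its dominant term, and $\|Tf_N\|_{\Fs^s_{pq}}\asymp N^{1/p}\|\eta\|_{L^p}$, giving ratio $N^{1/p-1/q}\to\infty$. \emph{Case $p>q$.} Reverse the construction: with $f_N:=\sum_{k=1}^N\zeta_k(\cdot+y_k)$, the pieces $\phi_k\ast f_N$ are now dispersed at the sites $-y_k$, so $\|f_N\|_{\Fs^s_{pq}}\asymp N^{1/p}\|\eta\|_{L^p}$, while $Tf_N=\sum_k\tau_{y_k}\zeta_k(\cdot+y_k)=\sum_k\zeta_k$ re-collects them at the origin with $\|Tf_N\|_{\Fs^s_{pq}}\asymp N^{1/q}\|\eta\|_{L^p}$, yielding ratio $N^{1/q-1/p}\to\infty$.

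Since $T:\Ss'\to\Ss'$ is continuous and $\Fs^s_{pq}\hookrightarrow\Ss'$ continuously, the restriction $T|_{\Fs^s_{pq}}\colon\Fs^s_{pq}\to\Ss'$ has closed graph; if its range were contained in $\Fs^s_{pq}$, the (quasi-Banach) closed graph theorem would force a uniform operator bound, contradicting the unbounded ratios above. Hence $T(\Fs^s_{pq})\not\subset\Fs^s_{pq}$. The chief obstacle is the pointwise $\ell^q$-in-$L^p$ estimate for the dispersed configurations: I must verify that the Schwartz tails of $\eta$ do not spoil the ``disjoint profile'' heuristic, which is precisely where the separation assumption $\inf_{j\ne k}|y_j-y_k|>0$ enters quantitatively, together with a careful placement of the support of $\widehat\eta$ so that the off-diagonal cross-terms $\phi_k\ast\zeta_j$, $j\ne k$, vanish exactly rather than only approximately.
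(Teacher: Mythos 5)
Your part (ii) is the paper's own argument (almost-orthogonality of the Littlewood--Paley pieces, translation invariance of $L^p$, shift-invariance of $\ell^q$), and your part (iii) is a genuinely different and viable route. The paper uses spatially compactly supported modulated bumps $\chi e_j$, so the translates overlap boundedly in space by construction, but the frequency pieces $\phi_j\ast(\chi e_k)$ are only \emph{almost} diagonal; this forces the decay estimates of Proposition~\ref{Prop::DecayPhiE} and the lower bound of Proposition~\ref{Prop::BddBelow}, and the counterexample is a single infinite series with $\|f\|_{\Fs^s_{pq}}<\infty$ and $\|Tf\|_{\Fs^s_{pq}}=\infty$. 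You instead take exactly band-limited atoms (exact diagonality in frequency, Schwartz tails in space) and finite truncations plus a closed-graph/uniform-boundedness reduction. The exact vanishing $\phi_k\ast\zeta_j=0$ for $j\ne k$ is indeed available here because $\hat\phi_0\equiv1$ near $\overline{B(0,1)}$ leaves a multiplicative gap around $|\xi|=2^k$ on which $\hat\phi_k\equiv1$ and $\hat\phi_{k\pm1}\equiv0$; this is worth stating, since for a partition with full annular overlaps it would fail. The price of your atoms is exactly the estimate you flag as ``the chief obstacle'': for $p>q$ you need $\bigl\|\bigl(\sum_{j\le N}|\eta(\cdot+y_j)|^q\bigr)^{1/q}\bigr\|_{L^p}\lesssim N^{1/p}$ uniformly in $N$. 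This is the paper's Lemma~\ref{Lem::DiscreteSum} (dominate $|\eta|^q$ by $|B(0,\mu_0)|^{-1}\1_{B(0,\mu_0)}\ast\tilde\varphi$ and use disjointness of the balls $B(y_j,\mu_0)$); it must actually be proved, as it carries the whole case $p>q$. Two further items to repair in (iii): the case $p=\infty>q$ uses the different norm \eqref{Eqn::TLNorm2} and needs its own upper bound for the dispersed configuration and lower bound on a fixed ball; and the closed-graph step as written presupposes that $T:\Ss'\to\Ss'$ is continuous, which is \emph{not} among the hypotheses of (ii)--(iii) (only $\inf_{j\neq k}|y_j-y_k|>0$ is assumed, and (i) shows continuity can fail). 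The graph is still closed, but you should route the argument through $\Fs^s_{pq}\hookrightarrow\Bs^s_{p\max(p,q)}\hookrightarrow\Ss'$ together with part (ii).

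The genuine gap is part (i), in both directions. For the forward direction, the reason you give does not yield absolute convergence: the structure theorem gives $|\phi_j\ast f(x)|\le C(1+2^j)^N(1+|x|)^N$, and the Schwartz seminorms of $\tau_{-y_j}\psi$ lose a factor $(1+|y_j|)^N\le 2^{N_0 jN}$, so the termwise bound \emph{grows} in $j$ and the series is not obviously summable. What saves the sum is the rapid decay in $j$ of the Littlewood--Paley pieces of the \emph{test} function: $p_N(\tilde\phi_j\ast\psi)\lesssim_{M,\psi}2^{-jM}$ for every $M$, which beats the polynomial loss $2^{N_0jN}$ from translating by $y_j$. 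Your sketch does not invoke this, and without it the forward direction does not close. The converse is worse off: precisely because $\|\phi_j\ast\psi\|_\infty=O(2^{-jM})$ for every $M$ and every $\psi\in\Ss$, the naive candidates (sums of bumps or exponentials at the frequencies $2^{j_m}y_0$, paired with a fixed $\psi$) produce absolutely convergent pairings no matter how fast $|y_{j_m}|$ grows --- translation of a pure exponential is only a phase, and the cross terms are killed by oscillation. The failure of boundedness is not detected by the size of the multiplier but by its derivatives; the paper's computation $|(\nabla m)(2^j\xi_0)|=2\pi|y_j|$ in \eqref{Eqn::NablaM}, fed into the Schwartz-multiplier characterization \eqref{Eqn::Multiplier}, is the mechanism that makes the converse work. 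If you want to stay with a direct construction, you must build a test function whose Littlewood--Paley pieces decay only barely super-polynomially, tuned to the growth of $(y_j)$, and show the resulting $\sum_j(\phi_j\ast\psi)(\cdot+y_j)$ fails to be rapidly decreasing; ``construct $f$ and $\psi$ to force divergence'' is hiding all of this.
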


As an immediate corollary we see that Triebel-Lizorkin spaces cannot be interpolated from Besov space (unless $p=q$) in the classical setting.
\begin{cor}\label{Cor::Interpo}
    Elements in $\{\Fs_{pq}^s(\R^n):s\in\R,\ 0<p,q\le\infty,\ p\neq q\}$ can never be any classical interpolation space from any pair of elements in $\{\Bs_{pq}^s(\R^n):s\in\R,\ 0<p,q\le\infty\}$.
\end{cor}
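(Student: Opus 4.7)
The plan is to invoke the characteristic property of interpolation functors: any linear operator bounded on both endpoints of a compatible couple is automatically bounded on every interpolation space between them. Theorem \ref{Thm::TBdd} supplies an operator $T$ that is bounded on every Besov space yet fails to preserve $\Fs_{pq}^s(\R^n)$ whenever $p\neq q$, and this mismatch will produce the desired contradiction.

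Concretely, I first fix the sequence $(y_j)$ in \eqref{Eqn::DefT} so that it satisfies the mild growth condition $|y_j|\le 2^{N_0 j}$ from part~(i) of Theorem \ref{Thm::TBdd}; any polynomially growing choice such as $y_j=je_1$ will do. Part~(i) then guarantees that $T$ is a well-defined continuous linear operator on $\Ss'(\R^n)$, the common ambient space in which all Besov and Triebel-Lizorkin spaces are continuously embedded. Next, suppose toward contradiction that some $\Fs_{pq}^s(\R^n)$ with $p\neq q$ arises as a classical interpolation space from a Besov couple $(\Bs_{p_0q_0}^{s_0}(\R^n),\Bs_{p_1q_1}^{s_1}(\R^n))$. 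By part~(ii) of Theorem \ref{Thm::TBdd}, $T$ maps each endpoint boundedly into itself, so any interpolation functor must yield a bounded operator $T:\Fs_{pq}^s(\R^n)\to\Fs_{pq}^s(\R^n)$; in particular $T(\Fs_{pq}^s(\R^n))\subset\Fs_{pq}^s(\R^n)$. This directly contradicts part~(iii), completing the argument.

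The only point requiring care is the scope of the phrase \emph{classical interpolation space}: one has to confirm that the standard quasi-Banach interpolation methods (real, complex, and their variants) all share the operator-transport property for compatible couples sitting inside a common Hausdorff ambient space, and part~(i) supplies that ambient space here. Since this transport property is essentially the defining feature of an interpolation functor, I do not anticipate any substantive technical obstacle; the corollary is a short formal consequence of Theorem \ref{Thm::TBdd} once the interpolation framework is set up.
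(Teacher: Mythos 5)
Your proposal is correct and follows essentially the same route as the paper: the paper formalizes ``classical interpolation space'' in two ways in Appendix~\ref{Section::DefInt} (via interpolation functors and via set-theoretical interpolation spaces), and in both Corollaries~\ref{Cor::InterpoReform1} and \ref{Cor::InterpoReform2} derives the contradiction exactly as you do, from $T$ being admissible on any Besov couple by Theorem~\ref{Thm::TBdd}~\ref{Item::TBdd::Bs} while failing to preserve $\Fs_{pq}^s$ by \ref{Item::TBdd::Fs}. Your extra care about choosing $(y_j)$ with $|y_j|\le 2^{N_0j}$ so that $T$ acts on the ambient space $\Ss'(\R^n)$ is a reasonable (if not strictly necessary) precaution, since boundedness on each endpoint already gives boundedness on the sum space $X_0+X_1$.
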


This seems to be a well-known result, as there are discussions on real-interpolation of Besov spaces, e.g. \cite{KrepkogorskiiInterpolation,DeVorePopovInterpolation}. But to the best of author's knowledge, Corollary~\ref{Cor::Interpo} is not found in literature.

For completeness we give more concrete statements in Corollaries~\ref{Cor::InterpoReform1} and \ref{Cor::InterpoReform2} by recalling the definitions of (both categorical and set-theoretical) interpolation spaces in Appendix \ref{Section::DefInt}.

\begin{rmk}
    Here Corollary~\ref{Cor::Interpo} is stated ``without extra structures imposed on Besov spaces''. It is important to point out that the Triebel-Lizorkin spaces can may still be obtained via interpolations on spaces with extra structures (see \eqref{Eqn::StrBana::FfromB}). This idea was brought up by Kunstmann \cite{KunstmannInterpolation} using \textit{structured Banach spaces} and later generalized by Lindermulder-Lorist \cite{LindermulderLoristInterpolation} who unified Kunstmann's approach and the classical real and complex interpolations. We will discuss it briefly in Appendix \ref{Section::StrBana}. However, a general interpolation framework is still not known to extract Triebel-Lizorkin spaces from ``structured Besov spaces'', see Question~\ref{Ques::StrBana}. 
\end{rmk}

\begin{rmk}[Application to fractional Sobolev spaces]\label{Rmk::WspHsp}
    In literature there are two standard fractional Sobolev spaces, the Sobolev-Bessel spaces $H^{s,p}=\Fs_{p2}^s$ and the Sobolev–Slobodeckij spaces $W^{s,p}=\Bs_{pp}^s$ for $1<p<\infty$ and $s\in\R_+\backslash\Z$. See \cite[Page~34]{TriebelTheoryOfFunctionSpacesI} for a short description.
    
    These two spaces are different when $p\neq2$. As a result $T$ is bounded in Sobolev–Slobodeckij spaces but not in Sobolev-Bessel space (unless $p=2$). 
\end{rmk}
\begin{rmk}[$T$ is not H\"ormander-Mikhlin multiplier]We say that $m(\xi):\R^n\to\C$ is a H\"ormander-Mikhlin multiplier if $\sup_{j\in\Z}\|m(2^{-j}\xi)\|_{H^{s}(\frac12<|\xi|<2)}<\infty$ for some $s>\frac n2$. The multiplier theorem shows that for such $m$ the operator $[f\mapsto(m\hat f)^\vee]:L^p(\R^n)\to L^p(\R^n)$ is bounded for all $1<p<\infty$. 

The operator $T$ is indeed a convolution operator, in other words a Fourier multiplier operator. But $T$ is not bounded in $L^p=\Fs_{p2}^0$ for  $p\in(1,\infty)\backslash\{2\}$, as a result it is not a H\"ormander-Mikhlin multiplier. In fact for its multiplier $m(\xi)$ (see \eqref{Eqn::FourierMultiplier}) $\sup_{j\in\Z}\|m(2^{-j}\xi)\|_{H^{s}(\frac12<|\xi|<2)}=\infty$ for all $s>0$. This follows from the fact that $\|e^{-2\pi i y_j\cdot\xi}\|_{H^{s}(\frac12<|\xi|<2)}\xrightarrow{j\to\infty}\infty$ as we have $y_j\to\infty$ (see also \eqref{Eqn::NablaM}).
    
\end{rmk}

\begin{rmk}[On homogeneous function spaces]
    Note that the image of $Tf$ has Fourier support away from the origin. Therefore the same result is true if we replace $\Bs_{pq}^s$ and $\Fs_{pq}^s$ by the homogeneous spaces $\dot\Bs_{pq}^s$ and $\dot\Fs_{pq}^s$ respectively. We leave the details to the reader.
\end{rmk}

\begin{rmk}
    No matter how rapidly $|y_j|$ grows, $T$ is always defined on Besov functions. The assumption $|y_j|\le 2^{N_0j}$ is only used to ensure that $T$ is defined on space of tempered distributions.  As a corollary we get an alternative proof that $\Ss'(\R^n)\backslash\bigcup_{p,q,s}\Bs_{pq}^s(\R^n)\neq\varnothing$, i.e. not every tempered distributions are Besov functions.
\end{rmk}

Here by a \textit{Littlewood-Paley family} we mean a sequence of Schwartz functions $\phi=(\phi_0,\phi_1,\dots)\in\Ss(\R^n)$ such that their Fourier transform $\hat\phi_j(\xi)=\int \phi_j(x)e^{-2\pi ix\xi}dx$ satisfy
\begin{itemize}
    \item $\supp\hat\phi_0\subset B(0,2)$ and $\hat\phi_0\equiv1$ in a neighborhood of $\overline {B(0,1)}$.
    \item $\hat\phi_j(\xi)=\hat\phi_0(2^{-j}\xi)-\hat\phi_0(2^{1-j}\xi)$ for $j\ge1$. 
\end{itemize}

As a result $\supp\hat\phi_j\subset\{2^{j-1}<|\xi|<2^{j+1}\}$ for all $j\ge1$.

For $0<p,q\le\infty$ and $s\in\R$ the \textit{Besov and Triebel-Lizorkin norms associated to} $\phi$ are defined by
\begin{align}
\label{Eqn::BsNorm}
\|f\|_{\Bs_{pq}^s(\phi)}&:=\|(2^{js}\phi_j\ast f)_{j=0}^\infty\|_{\ell^q(\N_{\ge0};L^p(\R^n))}=\bigg(\sum_{j=0}^\infty2^{jsq}\Big(\int_{\R^n}|\phi_j\ast f(x)|^pdx\Big)^\frac qp\bigg)^\frac1q;
\\
    \label{Eqn::TLNorm1}
    \|f\|_{\Fs_{pq}^s(\phi)}&:=\|(2^{js}\phi_j\ast f)_{j=0}^\infty\|_{L^p(\R^n;\ell^q(\N_{\ge0}))}=\bigg(\int_{\R^n}\Big(\sum_{j=0}^\infty|2^{js}\phi_j\ast f(x)|^q\Big)^\frac pqdx\bigg)^\frac1p,&p<\infty;
    \\
    \label{Eqn::TLNorm2}
    \|f\|_{\Fs_{\infty q}^s(\phi)}&:=\sup_{x\in\R^n,J\in\Z}2^{J\frac nq}\|(2^{js}\phi_j\ast f)_{j=\max(J,0)}^\infty\|_{L^q(B(x,2^{-J});\ell^q)},&p=\infty.
\end{align}

For $\As\in\{\Bs,\Fs\}$ we define $\As_{pq}^s(\R^n)=\{f\in\Ss'(\R^n):\|f\|_{\As_{pq}^s(\phi)}<\infty\}$ by a fixed choice of $\phi$. Different choice $\phi$ results in equivalent norm (see e.g. \cite[Proposition~2.3.2]{TriebelTheoryOfFunctionSpacesI} and \cite[Propositions 1.3 and 1.8]{TriebelTheoryOfFunctionSpacesIV}).

\medskip
In the following $\1_U:\R^n\to\{0,1\}$ denotes the characterization of $U\subset\R^n$. We use $\1=\1_{\R^n}$.
We use the notation $A \lesssim B$ to denote that $A \leq CB$, where $C$ is a constant independent of $A,B$. We use $A \approx B$ for ``$A \lesssim B$ and $B \lesssim A$''. And we use $A\lesssim_pB$ to emphasize that the constant depends on the quantity $p$.

\section{Proof of the Theorem}

The boundedness of $T$ in $\Ss'$ uses the characterization of multipliers on Schwartz space, originally given in \cite{SchwartzBook}. See also \cite{LarcherMultiplier}.
\begin{proof}[Proof of Theorem~\ref{Thm::TBdd}~\ref{Item::TBdd::Ss}]
    Applying Fourier transform we have, for every Schwartz function $f$,
    \begin{equation}\label{Eqn::FourierMultiplier}
        (Tf)^\wedge(\xi)=\sum_{j=1}^\infty e^{-2\pi iy_j\xi}\hat\phi_j(\xi)\hat f(\xi)=\sum_{j=1}^\infty e^{-2\pi iy_j\xi}\hat\phi_1(2^{1-j}\xi)\hat f(\xi)=:m(\xi)\hat f(\xi).
    \end{equation}

    Since Fourier transform is isomorphism on space of tempered distributions, $T:\Ss'(\R^n)\to\Ss'(\R^n)$ is bounded if and only if the multiplier operator $[g\mapsto mg]:\Ss'(\R^n)\to\Ss'(\R^n)$ is bounded. By taking adjoint this holds if and only if $[g\mapsto mg]:\Ss(\R^n)\to\Ss(\R^n)$ is bounded.

    By characterization of Schwartz multipliers (see for example \cite[Proposition~4.11.5, page~417]{Horvath}), $[g\mapsto mg]:\Ss(\R^n)\to\Ss(\R^n)$ is bounded if and only if
    \begin{equation}\label{Eqn::Multiplier}
         m\in C^\infty_\loc(\R^n)\text{ and } \forall k\ge0,\ \exists M_k>1,\  \forall\xi\in\R^n,\quad |\nabla^k m(\xi)|\le M_k(1+|\xi|)^{M_k}.
    \end{equation}

    If there is $N_0$ such that $|y_j|\le 2^{N_0j}$ for all $j$, then for every $j\ge1$ and $2^{j-1}<|\xi|<2^{j+1}$, $$|\nabla^k(e^{-2\pi iy_j\xi}\hat\phi_1(2^{1-j}\xi))|\lesssim_k|\nabla^{\le k} e^{-2\pi iy_j\xi}|\cdot|\nabla^{\le k}(\hat\phi_1(2^{1-j}\xi))|\lesssim |2\pi y_j|^k\sum_{l=0}^k2^{(1-j)l}|\nabla^l\hat\phi_1|\lesssim_{\phi,k}2^{N_0kj}. $$

    That is to say there is a $C_k>1$ such that $|\nabla^km(\xi)|\le C_k(1+|\xi|)^{N_0k}$ for all $\xi$. Taking $M_k=\max(C_k,N_0k)$, \eqref{Eqn::Multiplier} is satisfied and hence $m$ is a Schwartz multiplier.

    Conversely, using $\supp\nabla\hat\phi_1\subset\{1<|\xi|<2\}\cup\{2<|\xi|<4\}$ and $\hat\phi_1(2)=1$, for every $|\xi_0|=1$ we have 
    \begin{equation}\label{Eqn::NablaM}
        (\nabla m)(2^j\xi_0)=\nabla_\xi(e^{-2\pi iy_j\xi})|_{\xi=2^j\xi_0}=e^{-2\pi i2^jy_j\xi_0}\cdot(-2\pi iy_j)\quad\Rightarrow\quad|(\nabla m)(2^j\xi_0)|=2\pi|y_j|.
    \end{equation}

    Therefore if $m$ is a Schwartz multiplier, taking $k=1$ in \eqref{Eqn::Multiplier} we get $2\pi|y_j|\le (1+2^j)^{M_1}\le 2^{j(M_1+1)}$ for all $j\ge1$. Taking $N_0=M_1+1$ we get $|y_j|\le 2^{N_0j}$ for all $j\ge1$.
\end{proof}

The boundedness in Besov spaces follows from direct computations.

\begin{proof}[Proof of Theorem~\ref{Thm::TBdd}~\ref{Item::TBdd::Bs}]
    The support assumption of $\hat\phi$ gives $\hat\phi_j=(\hat\phi_{j-1}+\hat\phi_j+\hat\phi_{j+1})\hat\phi_j$ for all $j\ge0$ (here we use $\phi_{-1}=0$).  
    
    The standard estimate yields $\|\phi_j\ast\phi_k\ast f\|_{L^p}\lesssim_p\|\phi_k\ast f\|_{L^p}$ for $|k|\le 1$, see e.g. \cite[(2.3.2/4)]{TriebelTheoryOfFunctionSpacesI}, which can be done via either H\"ormander-Mikhlin multipliers or Peetre's maximal functions.

    Therefore $\phi_j\ast Tf=\phi_j\ast\sum_{k=j-1}^{j+1}\tau_{y_k}(\phi_k\ast f)=\sum_{k=j-1}^{j+1}\tau_{y_k}(\phi_j\ast \phi_k\ast f)$, which means    
    \begin{equation}\label{Eqn::Bsbdd::MainIneqn}
        \begin{aligned}
        \|Tf\|_{\Bs_{pq}^s(\phi)}=&\Big\|\Big(2^{js}\sum_{k=j-1}^{j+1}\tau_{y_k}(\phi_j\ast \phi_k\ast f)\Big)_{j=0}^\infty\Big\|_{\ell^q(L^p)}\lesssim_{p,q}\sum_{k=j-1}^{j+1}\big\|\big(2^{js}\tau_{y_k}(\phi_j\ast\phi_k\ast f)\big)_{j=0}^\infty\big\|_{\ell^q(L^p)}
        \\
        =&\sum_{k=j-1}^{j+1}\big\|\big(2^{js}\|\phi_j\ast\phi_k\ast f\|_{L^p}\big)_{j=0}^\infty\big\|_{\ell^q}\lesssim_p\sum_{k=j-1}^{j+1}\big\|\big(2^{js}\|\phi_k\ast f\|_{L^p}\big)_{j=0}^\infty\big\|_{\ell^q}
        \\
        \lesssim&_{s}\sum_{k=j-1}^{j+1}\big\|\big(2^{ks}\phi_k\ast f\big)_{j=0}^\infty\big\|_{\ell^q(L^p)}\approx\|f\|_{\Bs_{pq}^s(\phi)}.
    \end{aligned}
    \end{equation}

    This proves $T:\Bs_{pq}^s(\R^n)\to\Bs_{pq}^s(\R^n)$ for all $0<p,q\le\infty$ and $s\in\R$.
\end{proof}

Next for each $p\neq q$ we construct examples $f=f_{pqs}\in\Fs_{pq}^s(\R^n)$ such that $Tf\notin \Fs_{pq}^s(\R^n)$. 

Let $\mu_0:=\frac12\inf_{j\neq k}|y_j-y_k|>0$. Fix a $y_0\in\R^n$ such that $|y_0|=1$. We set
\begin{align}\label{Eqn::DefChi}
    &\chi\in C_c^\infty(B(0,2\mu_0))\text{ such that }\1_{B(0,\mu_0)}\le\chi\le\1_{B(0,2\mu_0)};
    \\\label{Eqn::DefExp}
    &e_j(x):=\exp(2\pi i 2^jy_0\cdot x),\quad \tilde e_j(x):=e_j(-x)=\exp(-2\pi i2^jy_0\cdot x)\quad\text{for }j\ge0.
\end{align}

Notice that $e_j(x-y)=e_j(x)e_j(-y)=e_j(x)\tilde e_j(y)$. Therefore, for $j\ge0$, $g,h\in\Ss(\R^n)$ and $x\in\R^n$,
\begin{equation}\label{Eqn::ChangeConv}
    |g\ast (h e_j)(x)|=\bigg|\int g(y)h(x-y)e_j(x-y)dy\bigg|=\bigg|e_j(x)\int g(y)\tilde e_j(y)h(x-y)dy\bigg|=|(g\tilde e_j)\ast h(x)|.
\end{equation}

Our counterexample function $f=f_{pqs}$ would have the form 
$$f(x)=\sum_{j=1}^\infty a_j2^{-js}\cdot(\chi e_{j})(x+u_j)$$
\begin{itemize}
    \item when $p<q$, we require $(a_j)_j\in\ell^q\backslash\ell^p$ and $u_j\equiv0$;
    \item when $p>q$, we require $(a_j)_j\in\ell^p\backslash\ell^q$ and $u_j\equiv y_{j}$.
\end{itemize}

Either case we want $2^{js}\phi_{j}\ast f\approx a_j\cdot\tau_{-u_j}(\chi e_{j})\approx a_j\cdot e_j\1_{B(-u_j,\mu_0)}$. More precisely
\begin{prop}\label{Prop::DecayPhiE}
    For every $M\ge1$ there is $C=C(M,\mu_0,\phi,\chi)>0$ such that,
    \begin{equation}\label{Eqn::DecayPhiE1}
        |\phi_j\ast(\chi e_k)(x)|\le C 2^{-M\max(j,k)}(1+2^j\max(0,|x|-2\mu_0))^{-M},\quad\text{for every }0\le j\neq k\quad\text{and}\quad x\in\R^n.
    \end{equation}
    In particular there is a $C'=C'(M,\mu_0,\phi,\chi)>0$ such that
    \begin{equation}\label{Eqn::DecayPhiE2}
        |\phi_j\ast(\chi e_k)(x)|\le C' 2^{-M|j-k|}(1+|x|)^{-M},\quad\text{for all }j,k\ge0\text{ and }x\in\R^n.
    \end{equation}
\end{prop}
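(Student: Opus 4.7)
The plan is to pass to the Fourier side. Writing the inverse Fourier transform of $\hat\phi_j(\xi)\hat\chi(\xi-2^ky_0)$ and substituting $\eta=\xi-2^ky_0$ gives
\[
\phi_j\ast(\chi e_k)(x)=e_k(x)\,\mathcal{F}^{-1}[G_{j,k}](x),\qquad G_{j,k}(\eta):=\hat\phi_j(\eta+2^ky_0)\hat\chi(\eta).
\]
Since $|x|\ge\max(0,|x|-2\mu_0)$, estimate \eqref{Eqn::DecayPhiE1} will follow from the stronger pointwise bound $|\mathcal{F}^{-1}G_{j,k}(x)|\lesssim_M 2^{-M\max(j,k)}(1+2^j|x|)^{-M}$. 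By the standard Fourier inequality $(1+2^j|x|)^M|\mathcal{F}^{-1}G_{j,k}(x)|\lesssim_M\sum_{|\alpha|\le M}2^{j|\alpha|}\|\partial^\alpha G_{j,k}\|_{L^1}$, the task reduces to showing $\|\partial^\alpha G_{j,k}\|_{L^1(\R^n)}\lesssim_{N,\alpha}2^{-N\max(j,k)}$ for arbitrary $N$.

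The key input I will establish is the uniform separation $\dist(2^ky_0,\supp\hat\phi_j)\gtrsim 2^{\max(j,k)}$ whenever $j\ne k$. For this, I note that $\supp\hat\phi_0$ is compact in the open ball $B(0,2)$ while $\hat\phi_0\equiv 1$ on an open neighborhood of the compact $\overline{B(0,1)}$; hence there exist $\delta,\epsilon>0$ (depending only on $\hat\phi_0$) with $\supp\hat\phi_0\subset\overline{B(0,2-\epsilon)}$ and $\hat\phi_0\equiv 1$ on $\overline{B(0,1+\delta)}$. Combined with the identity $\hat\phi_j(\xi)=\hat\phi_0(\xi/2^j)-\hat\phi_0(\xi/2^{j-1})$, this yields the strict support improvement
\[\supp\hat\phi_j\subset\{(1+\delta)2^{j-1}\le|\xi|\le(2-\epsilon)2^j\}\quad(j\ge 1),\]
from which the separation estimate follows by checking the four subcases $k\le j-2$, $k=j-1$, $k=j+1$, $k\ge j+2$ (with an easy adjustment when $j=0$).

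With the separation estimate in hand, $|\eta|\gtrsim 2^{\max(j,k)}$ on $\supp G_{j,k}$, and Schwartz decay gives $|\partial^\gamma\hat\chi(\eta)|\lesssim_{N,\gamma}(1+|\eta|)^{-N}\lesssim 2^{-N\max(j,k)}$ there. Derivatives of $\hat\phi_j(\cdot+2^ky_0)$ are bounded by $C_\beta 2^{-j|\beta|}\le C_\beta$ via the scaling $\hat\phi_j=\hat\phi_1(2^{1-j}\cdot)$, and $\supp G_{j,k}$ has volume $\lesssim 2^{jn}$. Combining by Leibniz, $\|\partial^\alpha G_{j,k}\|_{L^1}\lesssim_{N,\alpha}2^{jn-N\max(j,k)}\le 2^{-(N-n)\max(j,k)}$ (using $j\le\max(j,k)$); taking $N\ge M+n$ proves \eqref{Eqn::DecayPhiE1}. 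The corollary \eqref{Eqn::DecayPhiE2} then follows from $\max(j,k)\ge|j-k|$ and $(1+2^j|x|)^{-M}\le(1+|x|)^{-M}$ for $j\ge 0$, plus a separate direct treatment of the diagonal case $j=k$ using $\|\phi_j\|_{L^1}\lesssim 1$ together with the Schwartz decay of $\phi_j$ outside $B(0,2\mu_0)$.

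The main obstacle is the near-diagonal regime $|j-k|=1$, where the nominal Fourier support annulus $\{2^{j-1}<|\xi|<2^{j+1}\}$ places $2^ky_0$ on the boundary and yields no gap; the argument rescues this via the strict support improvement above, which uses more than the bare open-ball inclusion of $\supp\hat\phi_0$.
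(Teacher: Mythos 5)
Your argument is correct, and it takes a genuinely different route from the paper's. You work entirely on the Fourier side: after factoring out the modulation, $\phi_j\ast(\chi e_k)=e_k\cdot\mathcal F^{-1}[\hat\phi_j(\cdot+2^ky_0)\hat\chi]$, everything reduces to the separation $\dist(2^ky_0,\supp\hat\phi_j)\gtrsim 2^{\max(j,k)}$ together with the Schwartz decay of $\hat\chi$ and the crude volume bound $|\supp\hat\phi_j|\lesssim 2^{jn}$. The paper instead stays on the physical side: it inserts an auxiliary Littlewood--Paley family $(\psi_l)$ localized near frequency $2^{\max(j,k)}$ (using the same support information, packaged as $\supp(\phi_j\tilde e_k)^\wedge\subset\{2^{\max(j,k)-\rho_0+1}<|\xi|<2^{\max(j,k)+\rho_0-1}\}$), trades derivatives onto $\chi$ via $\psi_l\ast\chi=(\Delta^{-M/2}\psi_l)\ast(\Delta^{M/2}\chi)$ to gain $2^{-M\max(j,k)}$, and then runs a convolution tail estimate for the spatial decay. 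Both proofs hinge on exactly the same non-obvious point, which you isolate correctly: the bare annulus $\{2^{j-1}<|\xi|<2^{j+1}\}$ gives no gap when $|j-k|=1$, and one must upgrade to the strict inclusions coming from the compactness of $\supp\hat\phi_0$ in $B(0,2)$ and the openness of $\{\hat\phi_0=1\}$ around $\overline{B(0,1)}$ (your $\delta,\epsilon$ play the role of the paper's $\epsilon_0$). Your route is arguably shorter and yields the slightly stronger decay $(1+2^j|x|)^{-M}$ in place of $(1+2^j\max(0,|x|-2\mu_0))^{-M}$, at the price of losing the factor $2^{j|\alpha|}\le 2^{M\max(j,k)}$ from the integration by parts, which you correctly absorb by letting $N$ be arbitrary; the paper's physical-side argument produces the localized decay factor without that loss.

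One small bookkeeping correction: at the end you write ``taking $N\ge M+n$ proves \eqref{Eqn::DecayPhiE1}.'' With $\|\partial^\alpha G_{j,k}\|_{L^1}\lesssim 2^{-(N-n)\max(j,k)}$ and the prefactor $2^{j|\alpha|}\le 2^{M\max(j,k)}$ from the Fourier inequality, you need $N\ge 2M+n$ to land on $2^{-M\max(j,k)}$. Since your reduction is stated for arbitrary $N$, this is a trivial adjustment, not a gap.
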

\begin{proof}
 By assumption there is a $\epsilon_0>0$ such that $\supp\hat\phi_0\subset\{|\xi|<2^{1-\epsilon_0}\}$ and $\hat\phi_0|_{B(0,2^{\epsilon_0})}\equiv1$. Therefore $\supp\hat\phi_j\subset\{2^{j-1+\epsilon_0}<|\xi|<2^{j+1-\epsilon_0}\}$ for all $j\ge1$. Take $\rho_0=\rho_0(\eps_0)\ge1$ such that $1-2^{-\epsilon_0}\ge 2^{2-\rho_0}$. In particular $2^{\epsilon_0}-1\ge 2^{2-\rho_0}$ as well.
Note that $\supp(\tilde e_k)^\wedge=\{-2^ky_0\}\subset\{|\xi|=2^k\}$ for all $k\ge0$.  Therefore,
    \begin{equation}\label{Eqn::DecayPhiE::Supp}
        \supp(\phi_j\tilde e_k)^\wedge\subset\{2^{\max(j,k)-\rho_0+1}<|\xi|<2^{\max(j,k)+\rho_0-1}\},\quad\text{for all }j,k\ge0\quad\text{such that}\quad k\neq j.
    \end{equation}

    Let us define $(\psi_l)_{l\in\Z}\subset\Ss(\R^n)$ by $\hat\psi_l(\xi):=\hat\phi_1(2^{1-l}\xi)$. Therefore $\supp\hat\psi_l\subset\{2^{l-1}<|\xi|<2^{l+1}\}$ for all $l\in\Z$ and $\sum_{l\in\Z}\hat\psi_l(\xi)=1$ for $\xi\neq0$.  
We conclude that,
$$\phi_j\tilde e_k=\sum_{l=\max(j,k)-\rho_0}^{\max(j,k)+\rho_0}\psi_{l}\ast(\phi_j\tilde e_k),\quad\text{for all } j,k\ge0\text{ such that } j\neq k.$$

    Let us assume $M$ to be even without loss of generality. Since $\psi_{l}$ has Fourier support away from 0, we have $\psi_l\ast\chi=(\Delta^{-\frac M2}\psi_l)\ast(\Delta^\frac M2\chi)$ with $\Delta^\frac M2\chi$ still supported in $\supp\chi\subset B(0,2\mu_0)$, which means
    \begin{align*}
        &|\phi_j\ast(\chi e_k)(x)|\overset{\eqref{Eqn::ChangeConv}}=|(\phi_j\tilde e_k)\ast\chi(x)|\le\sum_{l=\max(j,k)-\rho_0}^{\max(j,k)+\rho_0}|(\phi_j\tilde e_k)\ast\psi_l\ast\chi(x)|
        \\
        =&\sum_{l=\max(j,k)-\rho_0}^{\max(j,k)+\rho_0}|(\phi_j\tilde e_k)\ast\Delta^{-\frac M2}\psi_l\ast\Delta^{\frac M2}\chi(x)|\le\sum_{l=\max(j,k)-\rho_0}^{\max(j,k)+\rho_0}|\phi_j|\ast|\Delta^{-\frac M2}\phi_l|\ast|\Delta^\frac M2\chi|(x)
        \\
        \le&\|\chi\|_{C^M}\sum_{l=\max(j,k)-\rho_0}^{\max(j,k)+\rho_0}|\phi_j|\ast|\Delta^{-\frac M2}\phi_l|\ast\1_{B(0,2\mu_0)}(x)\lesssim_\chi\sum_{l=\max(j,k)-\rho_0}^{\max(j,k)+\rho_0}\int_{B(0,2\mu_0)}|\phi_j|\ast|\Delta^{-\frac M2}\psi_l|(x-y)dy
        \\
        \le&\sum_{l=\max(j,k)-\rho_0}^{\max(j,k)+\rho_0} \iint_{|s|+|t|\ge\max(0,|x|-2\mu_0)}|\phi_j(t)||\Delta^{-\frac M2}\psi_l(s)|dtds
        \\
        \le&\sum_{l=\max(j,k)-\rho_0}^{\max(j,k)+\rho_0}\bigg(\|\phi_j\|_{L^1}\int_{|s|\ge\max(0,\frac12|x|-\mu_0)}|\Delta^{-\frac M2}\psi_l(s)|ds+\|\Delta^{-\frac M2}\psi_l\|_{L^1}\int_{|t|\ge\max(0,\frac12|x|-\mu_0)}|\phi_j(t)|dt\bigg)
        \\
        \lesssim&_{\phi,M}\sum_{l=\max(j,k)-\rho_0}^{\max(j,k)+\rho_0}2^{-Ml}\bigg(\int_{|s|\ge 2^l\max(0,\frac12|x|-\mu_0)}\hspace{-0.2in}|\Delta^{-\frac M2}\psi_0(s)|ds+\int_{|t|\ge 2^{j-1}\max(0,\frac12|x|-\mu_0)}\hspace{-0.2in}(|\phi_0(t)|+|\phi_1(t)|)dt\bigg)
        \\
        \lesssim&_{M,\phi}\sum_{l=\max(j,k)-\rho_0}^{\max(j,k)+\rho_0}\big(2^{-Ml}(1+2^l\max(0,\tfrac12|x|-\mu_0))^{-M}+2^{-Ml}(1+2^{j-1}\max(0,\tfrac12|x|-\mu_0))^{-M}\big)
        \\
        \lesssim&_{\mu_0} 2^{-M\max(j,k)}(1+2^{j-\rho_0}\max(0,\tfrac12|x|-\mu_0))^{-M}\lesssim_{\phi}2^{-M\max(j,k)}(1+2^j\max(0,|x|-2\mu_0))^{-M}.
    \end{align*}

    Therefore \eqref{Eqn::DecayPhiE1} holds for all $j\neq k$.

    For \eqref{Eqn::DecayPhiE2}, when $j\neq k$, \eqref{Eqn::DecayPhiE2} follows from 
\eqref{Eqn::DecayPhiE1} with $2^{\max(j,k)}(1+2^j\max(0,|x|-2\mu_0))\gtrsim_{\mu_0}2^{|j-k|}(1+|x|)$. When $j=k$, \eqref{Eqn::DecayPhiE2} is obtained from the following decay estimates: when $|x|\ge 4\mu_0$,
    \begin{align*}
    |\phi_j\ast(\chi e_j)(x)|\le&\int_{B(0,2\mu_0)}|\phi_j(x-y)|dy\le\int_{|y|>|x|/2}|\phi_j(y)|dy
    \\
    \le&\int_{|y|>2^{j-1}|x|}(|\phi_0(y)|+2^n|\phi_1(2y)|)dy\lesssim_{M,\mu_0}(1+|x|)^{-M}.\qedhere
\end{align*}
\end{proof}

For $p>q$ we want the estimate $\|f_{pqs}\|_{\Fs_{pq}^s}\lesssim\|(a_j)_j\|_{\ell^p}$, which is obtained from the following:

\begin{lem}\label{Lem::DiscreteSum}
    Let $\varphi:\R^n\to\R_+$ be a positive bounded function such that $\sup_x(1+|x|)^{n+1}|\varphi(x)|<\infty$. Let $(y_j)_j$ be from the assumption that $\inf_{j\neq k}|y_j-y_k|\ge2\mu_0$. Then for every $1\le r\le\infty$ there is a $C=C(r,\varphi)>0$ such that for every  $b=(b_j)_{j=1}^\infty$,
    \begin{gather}
        \label{Eqn::DiscreteSum1}
        \bigg\|\sum_{j=1}^\infty b_j\tau_{y_j}\varphi\bigg\|_{L^r(\R^n)}\le C\|b\|_{\ell^r};
        \\
        \label{Eqn::DiscreteSum2}
        \sup_{R>0;x\in\R^n}R^{n/r}\bigg\|\sum_{j=1}^\infty b_j\tau_{y_j}\varphi\bigg\|_{L^r(B(x,R))}\le C\|b\|_{\ell^\infty}.
    \end{gather}
\end{lem}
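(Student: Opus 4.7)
The plan is to exploit the decay $|\varphi(x)| \lesssim \omega(x) := (1+|x|)^{-(n+1)}$ together with the separation $\inf_{j\neq k}|y_j - y_k| \ge 2\mu_0$. Writing $f := \sum_{j=1}^\infty b_j\tau_{y_j}\varphi$, the workhorse estimate I would establish first is the uniform bound
\[
\sup_{x\in\R^n}\sum_{j=1}^\infty \omega(x-y_j) \le K,
\]
for some constant $K = K(n,\mu_0)$. This follows from a standard packing argument: since the balls $B(y_j,\mu_0)$ are pairwise disjoint, the dyadic annulus $\{y : 2^{k-1}\mu_0 < |y-x| \le 2^k\mu_0\}$ contains at most $O(2^{kn})$ of the points $y_j$, and the resulting tail series $\sum_k 2^{kn}\cdot 2^{-k(n+1)}$ converges.

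From this preliminary bound, the pointwise estimate
\[
|f(x)| \le \|b\|_{\ell^\infty}\sum_{j=1}^\infty |\varphi(x-y_j)| \lesssim K\|b\|_{\ell^\infty}
\]
handles the endpoint $r = \infty$ of \eqref{Eqn::DiscreteSum1}, and integrating it over $B(x,R)$ yields $\|f\|_{L^r(B(x,R))} \lesssim \|b\|_{\ell^\infty} R^{n/r}$, which produces the Morrey-type bound \eqref{Eqn::DiscreteSum2}. For the remaining range $1 \le r < \infty$ of \eqref{Eqn::DiscreteSum1}, I would apply discrete H\"older to the defining sum by splitting $|b_j|\omega(x-y_j) = (|b_j|^r\omega(x-y_j))^{1/r}\omega(x-y_j)^{1/r'}$ to obtain
\[
|f(x)|^r \lesssim \Big(\sum_j \omega(x-y_j)\Big)^{r-1}\sum_j |b_j|^r \omega(x-y_j) \le K^{r-1}\sum_j |b_j|^r \omega(x-y_j),
\]
after which integrating in $x$ via Fubini with $\|\omega\|_{L^1(\R^n)} < \infty$ (which is exactly the reason the decay exponent $n+1$ is assumed) gives $\|f\|_{L^r(\R^n)} \lesssim \|b\|_{\ell^r}$. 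The case $r = 1$ reduces to the triangle inequality combined with translation invariance.

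The whole argument is quite routine once the preliminary packing estimate is in hand, so I do not anticipate a genuine obstacle; the only step requiring real care is the count of the $y_j$'s inside dyadic annuli, which is also the single place where the separation hypothesis actually enters the proof. I would also insert a brief remark on absolute convergence of the series defining $f$, which is immediate from the preliminary bound and justifies the pointwise manipulations above.
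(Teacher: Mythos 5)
Your proof is correct, and it is a genuinely different route from the one in the paper. The paper avoids any packing or counting argument: it introduces the local maximal function $\tilde\varphi(x)=\sup_{|y|<\mu_0}|\varphi(x+y)|$, observes that $\varphi\le|B(0,\mu_0)|^{-1}\1_{B(0,\mu_0)}\ast\tilde\varphi$, and hence
\[
\Big\|\sum_j b_j\tau_{y_j}\varphi\Big\|_{L^r}\le\frac{\|\tilde\varphi\|_{L^1}}{|B(0,\mu_0)|}\Big\|\sum_j b_j\1_{B(y_j,\mu_0)}\Big\|_{L^r}
\]
by Young's inequality on $\R^n$; the disjointness of the balls $B(y_j,\mu_0)$ then yields $\|\sum_j b_j\1_{B(y_j,\mu_0)}\|_{L^r}=\|b\|_{\ell^r}\|\1_{B(0,\mu_0)}\|_{L^r}$ directly, covering all $1\le r\le\infty$ in one stroke. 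By contrast, you first establish the pointwise bound $\sup_x\sum_j(1+|x-y_j|)^{-(n+1)}\le K$ via a dyadic annulus count, then split into the cases $r=\infty$ (pointwise), $r=1$ (triangle inequality), and $1<r<\infty$ (discrete H\"older against $\omega^{1/r}\omega^{1/r'}$ followed by Fubini). Both routes are legitimate and of comparable length; the paper's convolution trick trades your H\"older step for Young's inequality and sidesteps the packing estimate entirely, while yours is more self-contained and makes the role of the separation hypothesis very transparent (it enters only in the annulus count, exactly as you say). One small remark: as literally printed, \eqref{Eqn::DiscreteSum2} and the auxiliary inequality \eqref{Eqn::DiscreteSumLInfty} carry the exponent $R^{n/r}$, which for $r<\infty$ cannot hold (take $g\equiv1$ and $R\to\infty$); the intended exponent is $R^{-n/r}$, which is what both the paper's reduction to $r=\infty$ and your integration over $B(x,R)$ actually establish, and is all that is used later (in fact the paper only invokes \eqref{Eqn::DiscreteSum2} at $r=\infty$, where the distinction disappears). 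Your write-up silently proves the corrected version, which is the right thing to do, though it would be worth flagging the sign explicitly.
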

The result holds for $r<1$ if $\varphi$ has a faster decay. In application we will use $r= p/q$ where $q<p$.
\begin{proof}
Note that for every $g\in L^\infty(\R^n)$, $R>0$ and $x\in\R^n$ we have 
\begin{equation}\label{Eqn::DiscreteSumLInfty}
    R^\frac nr\|g\|_{L^r(B(x,R))}\le|B(0,1)|^\frac1r\|g\|_{L^\infty}.
\end{equation} Therefore \eqref{Eqn::DiscreteSum2} is implied by taking $r=\infty$ in \eqref{Eqn::DiscreteSum1}.

    Let $\tilde\varphi(x)=\sup_{|y|<\mu_0}|\varphi(x+y)|$. Clearly $\sup_x(1+|x|)^{n+1}\tilde \varphi(x)<\infty$, thus $\tilde\varphi$ is still integrable. Therefore $\varphi(x)\le|B(0,\mu_0)|^{-1}\1_{B(0,\mu_0)}\ast\tilde\varphi(x)$ which means
    \begin{equation*}
        \bigg\|\sum_{j=1}^\infty b_j\tau_{y_j}\varphi\bigg\|_{L^r}=\bigg\|\sum_{j=1}^\infty b_j(\delta_{y_j}\ast\varphi)\bigg\|_{L^r}\le\bigg\|\sum_{j=1}^\infty b_j\frac{\1_{B(y_j,\mu_0)}}{|B(0,\mu_0)|}\ast\tilde\varphi\bigg\|_{L^r}\le\frac{\|\tilde\varphi\|_{L^1}}{|B(0,\mu_0)|}\bigg\|\sum_{j=1}^\infty b_j\1_{B(y_j,\mu_0)}\bigg\|_{L^r}.
    \end{equation*}
    
    Since $(B(y_j,\mu_0))_{j=1}^\infty$ are all disjointed, we get $\|\sum_{j=1}^\infty b_j\1_{B(y_j,\mu_0)}\|_{L^r}=\|b\|_{\ell^r}\|\1_{B(0,\mu_0)}\|_{L^r}$, finishing the proof of \eqref{Eqn::DiscreteSum1} and hence the whole lemma.
\end{proof}

Next we bound $\|Tf\|_{\Fs_{pq}^s}$ from below.
Recall from the assumption and construction that $(y_j)_{j=1}^\infty$ satisfy $\inf_{j\neq k}|y_j-y_k|\ge2\mu_0$ and $\chi$ satisfies $\1_{B(0,\mu_0)}\le\chi\le\1_{B(0,2\mu_0)}$.
\begin{prop}\label{Prop::BddBelow}
    For every $N>1$ there is a $K=K(N,\phi,\mu_0)\ge1$ such that for every $(u_k)_{k=1}^\infty\subset\R^n$, $j\ge K$ and $x\in B(y_j-u_j,\frac12\mu_0)$,
    \begin{equation}\label{Eqn::BddBelow1}
        |\phi_j\ast\phi_j\ast\tau_{y_j-u_j}(\chi e_j)(x)|-\sum_{\substack{k,l=1\\(k,l)\neq(j,j)}}^\infty2^{N|l-j|}|\phi_j\ast\phi_k\ast\tau_{y_k-u_l}(\chi e_l)(x)|\ge\tfrac12.
    \end{equation}

    In particular let $(a_j)_{j=1}^\infty\subset\C$ be such that $|a_j|\le2^{|j-k|}|a_k|$ for all $j,k\ge 1$, then for every $|s|\le N-1$, 
    \begin{equation}\label{Eqn::BddBelow2}
        2^{js}\bigg|\phi_j\ast\sum_{k,l=1}^\infty2^{-ls}a_l\cdot\tau_{y_k-u_l}\big(\phi_k\ast(\chi e_l)\big)(x)\bigg|\ge\frac{|a_j|}2,\quad\text{for }j\ge K\text{ and } x\in B(y_j-u_j,\tfrac12\mu_0). 
    \end{equation}
\end{prop}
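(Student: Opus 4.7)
The plan is to prove \eqref{Eqn::BddBelow1} directly and then deduce \eqref{Eqn::BddBelow2} as a bookkeeping consequence. For the diagonal term, I rewrite in Fourier and substitute $\xi=\eta+2^jy_0$ to obtain
\[\phi_j\ast\phi_j\ast(\chi e_j)(z)=e_j(z)\int e^{2\pi iz\cdot\eta}\,\hat\phi_j(\eta+2^jy_0)^2\,\hat\chi(\eta)\,d\eta.\]
Using $|y_0|=1$ together with the plateau/support conditions $\hat\phi_0\equiv1$ on $B(0,2^{\epsilon_0})$ and $\supp\hat\phi_0\subset B(0,2^{1-\epsilon_0})$ from the proof of Proposition~\ref{Prop::DecayPhiE}, a direct expansion of $\hat\phi_j(\eta+2^jy_0)=\hat\phi_0(2^{-j}\eta+y_0)-\hat\phi_0(2^{1-j}\eta+2y_0)$ shows $\hat\phi_j(\eta+2^jy_0)\equiv 1$ for $|\eta|\leq c_\phi\,2^j$ with some $c_\phi>0$. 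Combining this with Fourier inversion $\int e^{2\pi iz\eta}\hat\chi(\eta)d\eta=\chi(z)$, the difference $\phi_j\ast\phi_j\ast(\chi e_j)(z)-e_j(z)\chi(z)$ involves $\hat\chi$ only on $|\eta|>c_\phi 2^j$, and Schwartz decay bounds it by $\lesssim_M 2^{-jM}$ for every $M$. Translating by $y_j-u_j$ and restricting to $B(y_j-u_j,\mu_0/2)$, where $\chi\equiv 1$, yields $|\phi_j\ast\phi_j\ast\tau_{y_j-u_j}(\chi e_j)(x)|\geq 3/4$ for $j$ above a threshold $K_1$.

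For the off-diagonal sum in \eqref{Eqn::BddBelow1}, the Fourier supports of $\hat\phi_j$ and $\hat\phi_k$ force $\phi_j\ast\phi_k=0$ unless $k\in\{j-1,j,j+1\}$, so only three values of $k$ survive. I split the remaining terms by whether $l\neq j$ or $l=j$. When $l\neq j$, Proposition~\ref{Prop::DecayPhiE} \eqref{Eqn::DecayPhiE2} applied to $\phi_k\ast(\chi e_l)$ (using $|k-l|\geq|j-l|-1$) and then $L^1$-convolved with $\phi_j$ gives $|\phi_j\ast\phi_k\ast(\chi e_l)(y)|\lesssim_M 2^{-M|j-l|}(1+|y|)^{-M+n}$, so the weighted sum $\sum_{l\neq j}2^{N|l-j|}|\cdots|$ is a convergent geometric sum bounded by $C(N)\cdot 2^{-(M-N)}$. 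When $l=j$ and $k=j\pm 1$, Proposition~\ref{Prop::DecayPhiE} \eqref{Eqn::DecayPhiE1} supplies the amplitude bound $\|\phi_k\ast(\chi e_j)\|_{L^\infty}\lesssim_M 2^{-Mj}$, and convolving with $\phi_j$ gives $|\phi_j\ast\phi_k\ast(\chi e_j)(y)|\lesssim 2^{-Mj}$, which is negligible once $j\geq K_2$. Taking $M\gg N$ and $K=\max(K_1,K_2)$ large enough makes the total off-diagonal contribution at most $1/4$, and the reverse triangle inequality delivers \eqref{Eqn::BddBelow1}.

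To conclude \eqref{Eqn::BddBelow2}, the commutation of $\phi_j\ast$ with each $\tau_{y_k-u_l}$ yields
\[2^{js}\phi_j\ast\sum_{k,l=1}^\infty 2^{-ls}a_l\tau_{y_k-u_l}(\phi_k\ast(\chi e_l))(x)=\sum_{k,l=1}^\infty 2^{(j-l)s}a_l\,\phi_j\ast\phi_k\ast\tau_{y_k-u_l}(\chi e_l)(x).\]
Isolating the $(k,l)=(j,j)$ term contributes $a_j\,\phi_j\ast\phi_j\ast\tau_{y_j-u_j}(\chi e_j)(x)$; the remainder is controlled because the hypothesis $|a_l|\leq 2^{|j-l|}|a_j|$ and $|s|\leq N-1$ give $|2^{(j-l)s}a_l|\leq|a_j|\cdot 2^{|j-l|(|s|+1)}\leq|a_j|\cdot 2^{N|j-l|}$, so factoring out $|a_j|$ reduces \eqref{Eqn::BddBelow2} to \eqref{Eqn::BddBelow1}. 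The main obstacle is the diagonal analysis: the choice $|y_0|=1$ is essential because it places the spike frequency $2^jy_0$ in the flat plateau of $\hat\phi_j$, so that $\hat\phi_j(\eta+2^jy_0)^2-1$ vanishes on a large ball and the diagonal term converges to $e_j\chi$ rather than to a strictly smaller modulation.
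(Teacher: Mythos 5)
Your diagonal analysis is correct and essentially equivalent to the paper's: the paper computes $\int(\phi_j\ast\phi_j)\tilde e_j=\hat\phi_j(-2^jy_0)^2=1$ and subtracts a tail integral, while you work on the Fourier side and exhibit the plateau of $\hat\phi_j$ around $2^jy_0$; both hinge on $|y_0|=1$ placing $2^jy_0$ inside the annulus where $\hat\phi_j\equiv 1$. The deduction of \eqref{Eqn::BddBelow2} from \eqref{Eqn::BddBelow1} is also the same factoring argument as in the paper.

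However, there is a genuine gap in your off-diagonal estimate. First, for $l=j\pm 1$ the index $k$ can equal $l$ (e.g.\ $k=l=j+1$ satisfies $|k-j|\le 1$ and $(k,l)\neq(j,j)$), and then Proposition~\ref{Prop::DecayPhiE} applied to $\phi_k\ast(\chi e_l)$ only yields $2^{-M|k-l|}=1$, with no decay at all; passing from $2^{-M|k-l|}$ to $2^{-M|j-l|}$ via $|k-l|\ge|j-l|-1$ is simply false when $k=l$. Second, even if you instead hit the other factor and apply \eqref{Eqn::DecayPhiE2} to $\phi_j\ast(\chi e_l)$ (which is legitimate since $j\ne l$ there), the resulting sum $\sum_{l\ne j}2^{N|l-j|}2^{-M|j-l|}$ is bounded only by a constant of the form $C_M\,2^{N-M}$ with no $j$-dependence, so enlarging $K$ does not help; and you cannot simply send $M\to\infty$, because the constant $C_M$ from Proposition~\ref{Prop::DecayPhiE} grows with $M$ (it involves $\|\chi\|_{C^M}$), so there is no guarantee that $C_M 2^{-M}$ can be made $\le\tfrac18$. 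The paper sidesteps both problems by using the stronger form \eqref{Eqn::DecayPhiE1}, which yields $2^{-M\max(j,l)}$ rather than $2^{-M|j-l|}$. Since $\max(j,l)\ge j$, the weighted sum $\sum_{k=j-1}^{j+1}\sum_l 2^{N|l-j|}2^{-(N+1)\max(j,l)}\lesssim 2^{-j}$ genuinely tends to $0$, which is what justifies choosing $K$ large and makes a single fixed $M=N+2$ suffice. You should replace your $l\ne j$ estimate with \eqref{Eqn::DecayPhiE1} applied to whichever inner convolution ($\phi_k\ast(\chi e_l)$ if $k\ne l$, or $\phi_j\ast(\chi e_l)$ if $j\ne l$) has distinct indices; this restores the $j$-decay and closes the gap.
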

\begin{proof}
    Recall that from \eqref{Eqn::ChangeConv} that $|\phi_j\ast\phi_j\ast(e_j\chi)|=|((\phi_j\ast\phi_j)\tilde e_j)\ast \chi|$. 
    
    Note that $\int(\phi_j\ast\phi_j)\cdot\tilde e_j=(\phi_j\ast\phi_j)^\wedge(-2^jy_0)=\hat\phi_j(-2^jy_0)^2=1$. We see that $((\phi_j\ast\phi_j)\cdot\tilde e_j)\ast\1(x)=1$ for all $x\in\R^n$.

    Since $\phi_1$ rapidly decay we have for every $j\ge1$
    $$\int_{|y|>\frac12\mu_0}|(\phi_j\ast\phi_j)\cdot\tilde e_j(y)|dy=\int_{|y|>\frac12\mu_0}|\phi_j\ast\phi_j(y)|dy\le\int_{|y|>2^{2-j}\mu_0}|\phi_1\ast\phi_1(y)|dy\lesssim_{M,\mu_0}2^{-Mj}.$$
    In particular there is a $C_1=C_1(\phi,\mu_0)>0$ such that $\int_{|y|>\frac12\mu_0}|(\phi_j\ast\phi_j)\tilde e_j|\le C_12^{-j}$.
    
    Recall $\chi|_{B(0,\mu_0)}\equiv1$ from \eqref{Eqn::DefChi}. Therefore for $|x|<\frac12\mu_0$ and $j\ge1$,
    \begin{align*}
        &|\phi_j\ast\phi_j\ast(e_j\chi)(x)|=|((\phi_j\ast\phi_j)\cdot\tilde e_j)\ast\chi(x)|\ge|((\phi_j\ast\phi_j)\cdot\tilde e_j)\ast\1(x)|-|(\phi_j\ast\phi_j)\cdot\tilde e_j|\ast(\1-\chi)(x)
        \\
        \ge&1-\int_{|x-y|>\mu_0}|(\phi_j\ast\phi_j)\cdot\tilde e_j(y)|(1-\chi(x-y))dy\ge1-\int_{|y|>\frac12\mu_0} |\phi_j\ast\phi_j(y)|dy\ge 1-C_12^{-j}.
    \end{align*}
    
    By taking translation, this is to say
\begin{equation}\label{Eqn::BddBelow::Tmp1}
    |\phi_j\ast\phi_j\ast\tau_{y_j-u_j}(e_j\chi)|\ge(1-C_12^{-j})\cdot\1_{B(y_j-u_j,\frac12\mu_0)},\quad \text{for all }j\ge1.
\end{equation}   

    On the other hand since $\phi_j\ast\phi_k=0$ for $|j-k|\ge2$, we can assume the index $k$ in \eqref{Eqn::BddBelow1} satisfies $|k-j|\le1$. When $(k,l)\neq(j,j)$, by \eqref{Eqn::DecayPhiE1}, 
    \begin{equation*}
        \|\phi_j\ast\phi_k\ast(\chi e_l)\|_{L^\infty(\R^n)}\lesssim_{N,\mu_0,\phi,\chi}
        \begin{cases}
        \|\phi_j\|_{L^1}2^{-(N+2)\max(k,l)}&k\neq l\\\|\phi_k\|_{L^1}2^{-(N+2)\max(j,l)}&j\neq l    
        \end{cases}\approx_{N,\phi}2^{-(N+2)\max(j,l)}.
    \end{equation*}

    Therefore there is a $C_2>0$ such that 
    \begin{equation}\label{Eqn::BddBelow::Tmp2}
        \|\phi_j\ast\phi_k\ast(\chi e_l)\|_{L^\infty}\le C_22^{-(N+1)\max(j,l)},\quad \text{for all }j,k,l\ge1\text{ such that }(k,l)\neq(j,j).
    \end{equation}
    
    Combining \eqref{Eqn::BddBelow::Tmp1} and \eqref{Eqn::BddBelow::Tmp2} we have for every $j\ge 1$ and $x\in\R^n$,
    \begin{align*}
        &|\phi_j\ast\phi_j\ast\tau_{y_j-u_j}(\chi e_j)(x)|-\sum_{k,l\ge1;(k,l)\neq(j,j)}2^{N|l-j|}|\phi_j\ast\phi_k\ast\tau_{y_k-u_l}(\chi e_l)(x)|
        \\
        \ge&(1-C_12^{-j})\cdot\1_{B(y_j-u_j,\frac12\mu_0)}(x)-\sum_{k=j-1}^{j+1}\sum_{l=1}^\infty 2^{N|l-j|}C_22^{-(N+1)\max(j,l)}
        \\
        \ge&(1-C_12^{-j})\cdot\1_{B(y_j-u_j,\frac12\mu_0)}(x)-3C_2\bigg(\sum_{l=1}^j2^{N(j-l)-(N+1)j}+\sum_{l=j+1}^\infty 2^{N(l-j)-(N+1)l}\bigg)
        \\
        \ge&(1-C_12^{-j})\cdot\1_{B(y_j-u_j,\frac12\mu_0)}(x)-6C_22^{-j}.
    \end{align*}

    Take $K$ such that $(C_1+6C_2)2^{-K}\le\frac12$, i.e.  $K\ge1+\log_2(C_1+6C_2)$, we get \eqref{Eqn::BddBelow1}.

    Suppose $|s|\le N-1$ and $|a_j|\le2^{|j-k|}|a_k|$ holds for all $j,k\ge1$. We see that for every $x$ and $j\ge 1$,
    \begin{align*}
        \bigg|\sum_{\substack{k,l\ge1\\(k,l)\neq(j,j)}}2^{(j-l)s}a_l\cdot\phi_j\ast\phi_k\ast\tau_{y_k-u_l}(\chi e_l)(x)\bigg|
        \le &\sum_{\substack{k,l\ge1\\(k,l)\neq(j,j)}}2^{|j-l|(|s|+1)}|a_j|\cdot|\phi_j\ast\phi_k\ast\tau_{y_k-u_l}(\chi e_l)(x)|
        \\
        \le&|a_j|\sum_{\substack{k,l\ge1\\(k,l)\neq(j,j)}}2^{|j-l|N}\cdot|\phi_j\ast\phi_k\ast\tau_{y_k-u_l}(\chi e_l)(x)|.
    \end{align*}

    Applying \eqref{Eqn::BddBelow1} for $j\ge K$ we get \eqref{Eqn::BddBelow2} immediately.
\end{proof}

We now prove Theorem~\ref{Thm::TBdd}~\ref{Item::TBdd::Fs}. Recall the following convolution inequality, see e.g. \cite[Lemma~2]{RychkovThmofBui}: for every $0<p,q\le\infty$ and $\delta>0$ there is a $C_{p,q,\delta}>0$ such that
\begin{equation}\label{Eqn::FsConvIne}
    \Big\|\Big(\sum_{j=1}^\infty2^{-\delta|j-k|}g_j\Big)_{k=0}^\infty\Big\|_{L^p(\R^n;\ell^q)}\le C_{p,q,\delta}\|(g_j)_{j=1}^\infty\|_{L^p(\R^n;\ell^q)},\qquad g=(g_j)_{j=1}^\infty:\R^n\to\ell^q(\Z_+).
\end{equation}
Notice that if $p,q\ge1$ this follows directly from Young's convolution inequality on $\Z$.

\begin{proof}[Proof of Theorem~\ref{Thm::TBdd}~\ref{Item::TBdd::Fs}]
    Let $\chi$ be from \eqref{Eqn::DefChi}, $(e_j)_{j=1}^\infty$ be from \eqref{Eqn::DefExp}.
    
    Let $(u_j)_{j=1}^\infty\subset\R^n$ and $(a_j)_{j=1}^\infty\in\ell^\infty$ to be determined later, such that $|a_j|\le 2^{|j-k|}|a_k|$ for all $j,k\ge1$. For $s\in\R$ we define \begin{equation}\label{Eqn::DefF}
        f_{s,a,u}:=\sum_{j=1}^\infty 2^{-js}a_j\cdot\tau_{-u_j}(\chi e_{j}).
    \end{equation}
    
    Therefore when $p<\infty$,
    \begin{align}
    \notag
        \|f_{s,a,u}\|_{\Fs_{pq}^s(\phi)}=&\Big(\int_{\R^n}\Big\|\Big(\sum_{k=1}^\infty2^{(j-k)s}a_k\cdot\tau_{-u_k}\phi_j\ast(\chi e_{k})(y)\Big)_{j=0}^\infty\Big\|_{\ell^q}^p dy\Big)^{1/p}
        \\
    \notag
        \lesssim&_M\Big(\int_{\R^n}\Big\|\Big(\sum_{k=1}^\infty\frac{2^{(j-k)s-|j-k|M}a_k}{(1+|y+u_k|)^M}\Big)_{j=0}^\infty\Big\|_{\ell^q}^p dy\Big)^{1/p}&\text{(by \eqref{Eqn::DecayPhiE2})}
        \\
    \label{Eqn::FsBddUpforF1}
        \lesssim&_{p,q}\big\|\big(a_k(1+|y+u_k|)^{-M}\big)_{k=1}^\infty\big\|_{L^p_y(\R^n;\ell^q)}&\hspace{-0.2in}\text{(by \eqref{Eqn::FsConvIne} with $M\ge |s|+1$)}.
    \end{align}
When $p=\infty$, similarly by Proposition~\ref{Prop::DecayPhiE} and \eqref{Eqn::FsConvIne} we have, for $M\ge|s|+1$,
    \begin{align}
        \notag\|f_{s,a,u}\|_{\Fs_{\infty q}^s(\phi)}=&\sup_{x\in\R^n,J\in\Z}2^{J\frac nq}\Big(\int_{B(x,2^{-J})}\Big\|\Big(\sum_{k=1}^\infty2^{(j-k)s}a_k\cdot\tau_{-u_k}\phi_j\ast(\chi e_{k})(y)\Big)_{j=\max(J,0)}^\infty\Big\|_{\ell^q}^q dy\Big)^{1/q}
        \\
    \notag
        \lesssim&_M\sup_{x\in\R^n,J\in\Z}2^{J\frac nq}\Big(\int_{B(x,2^{-J})}\Big\|\Big(\sum_{k=1}^\infty\frac{2^{(j-k)s-|j-k|M}a_k}{(1+|y+u_k|)^M}\Big)_{j=\max(J,0)}^\infty\Big\|_{\ell^q}^q dy\Big)^{1/q}
        \\
    \label{Eqn::FsBddUpforF2}
        \lesssim&_{q}\sup_{x\in\R^n,J\in\Z}2^{J\frac nq}\big\|\big(a_k(1+|y+u_k|)^{-M}\big)_{k=1}^\infty\big\|_{L^q_y(B(x,2^{-J});\ell^q)}.
    \end{align}

    Recall that by \eqref{Eqn::DefT} and \eqref{Eqn::DefF},
    \begin{equation*}
        \phi_j\ast T(f_{s,a,u})=\phi_j\ast\sum_{k=1}^\infty \tau_{y_k}(\phi_k\ast f_{s,a,u})=\phi_j\ast\sum_{k,l=1}^\infty 2^{-ls}a_l\cdot\tau_{y_k-u_l}\big(\phi_k\ast(\chi e_l)\big).
    \end{equation*}
    
    Now we take  $K=K(|s|+1,\phi,\mu_0)\ge1$ to be the index in Proposition~\ref{Prop::BddBelow}. Since $|a_j|\le 2^{|j-k|}|a_k|$, applying \eqref{Eqn::BddBelow2} we see that, when $p<\infty$,
    \begin{align}
        \label{Eqn::FsBddBelowforF1}\|T(f_{s,a,u})\|_{\Fs_{pq}^s(\phi)}\ge&\big\|\big(2^{js}\phi_{j}\ast T(f_{s,a,u})\big)_{j=K}^\infty\big\|_{L^p(\R^n;\ell^q)}\ge\tfrac12\big\|\big(a_j\cdot\1_{B(y_{j}-u_j,\frac12\mu_0)}\big)\big)_{j=K}^\infty\big\|_{L^p(\R^n;\ell^q)}.
    \end{align}
    When $p=\infty$, similarly we have
    \begin{align}
        \notag\|T(f_{s,a,u})\|_{\Fs_{\infty q}^s(\phi)}\ge&\sup_{x\in\R^n;J\in\Z}2^{J\frac nq}\big\|\big(2^{js}\phi_{j}\ast T(f_{s,a,u})\big)_{j=\max(J,K)}^\infty\big\|_{L^q(B(x,2^{-J});\ell^q)}
        \\
        \notag
        \ge&\tfrac12\sup_{x\in\R^n;J\in\Z}2^{J\frac nq}\big\|\big(a_j\cdot\1_{B(y_{j}-u_j,\frac12\mu_0)}\big)\big)_{j=\max(J,K)}^\infty\big\|_{L^q(B(x,2^{-J});\ell^q)}
        \\
        \label{Eqn::FsBddBelowforF2}
        \ge&\tfrac12\big\|\big(a_j\cdot\1_{B(y_{2j}-u_j,\frac12\mu_0)}\big)\big)_{j=K}^\infty\big\|_{L^q(B(0,1);\ell^q)}.
    \end{align}

    Now we separate the cases $p<q$ and $p>q$.

\medskip
    When $p<q$ we choose $u_j\equiv0$. We pick $(a_j)_{j=1}^\infty\in\ell^q\backslash\ell^p$ such that $|a_j|\le2^{|j-k|}|a_k|$  for all $j,k\ge1$, e.g. $a_j:=(j+\frac3p)^{-1/p}$. Applying \eqref{Eqn::FsBddUpforF1} with $M\ge\max(|s|,n/p)+1$,
    \begin{align*}
        \|f_{s,a,0}\|_{\Fs_{pq}^s}\lesssim\big\|(1+|x|)^{-M}\big\|_{L^p_x} \big\|(a_k)_{k=1}^\infty\big\|_{\ell^q}<\infty.
    \end{align*}

    On the other hand by \eqref{Eqn::FsBddBelowforF1} and the fact that $\big(B(y_{j},\frac12\mu_0)\big)_{j=1}^\infty$ are disjointed we have
    \begin{equation*}
        \|T(f_{s,a,0})\|_{\Fs_{pq}^s}\gtrsim\big\|\big(a_j\cdot\1_{B(y_{j},\frac12\mu_0)}\big)\big)_{j=K}^\infty\big\|_{L^p(\R^n;\ell^q)}=|B(0,\tfrac12\mu_0)|^{\frac1p}\|(a_j)_{j=K}^\infty\|_{\ell^p}\approx_{\mu_0,p}\|(a_j)_{j=K}^\infty\|_{\ell^p}=\infty.
    \end{equation*}
    We conclude that $T(f_{s,a,0})\notin\Fs_{pq}^s(\R^n)$ as desired.
    
    \medskip When $p>q$ we choose $u_j\equiv y_{j}$ for all $j\ge 1$. We pick $(a_j)_{j=1}^\infty\in\ell^p\backslash\ell^q$ such that $|a_j|\le2^{|j-k|}|a_k|$  for all $j,k\ge1$, e.g. $a_j:=(j+\frac3q)^{-1/q}$. 
    
    In this case applying \eqref{Eqn::FsBddUpforF1}, \eqref{Eqn::FsBddUpforF2} with $M\ge\max(\frac{n+1}q,|s|+1)$ and \eqref{Eqn::DiscreteSum1}, \eqref{Eqn::DiscreteSum2} with $r=\frac pq\in(1,\infty]$,
    \begin{align*}
        \|f_{s,a,y}\|_{\Fs_{pq}^s}&\lesssim\bigg(\int_{\R^n}\Big(\sum_{k=1}^\infty\frac{|a_k|^q}{(1+|x+y_{k}|)^{Mq}}\Big)^\frac pqdx\bigg)^{1/p}\lesssim\big\|(|a_k|^q)_{k=1}^\infty\big\|_{\ell^{p/q}}^{1/q}=\|(a_k)_{k=1}^\infty\|_{\ell^p},&p<\infty;
        \\
        \|f_{s,a,y}\|_{\Fs_{\infty q}^s}&\lesssim\sup_{x\in\R^n,J\in\Z}2^{J\frac nq}\bigg(\int_{B(x,2^{-J})}\sum_{k=1}^\infty\frac{|a_k|^q}{(1+|x+y_{k}|)^{Mq}}dx\bigg)^{1/q}\lesssim\|(a_k)_{k=1}^\infty\|_{\ell^\infty},&p=\infty.
    \end{align*}

    On the other hand applying \eqref{Eqn::FsBddBelowforF1} when $p<\infty$ and \eqref{Eqn::FsBddBelowforF2} when $p=\infty$, both with $u_j\equiv y_{j}$ we have
    \begin{align*}
        &\|T(f_{s,a,y})\|_{\Fs_{pq}^s}\gtrsim\|(a_j\cdot\1_{B(0,\frac12\mu_0)})_{j=K}^\infty\|_{L^p(B(0,1);\ell^q)}
        \\
        &\quad=\big|B\big(0,\max(\tfrac12\mu_0,1)\big)|^\frac1p\|(a_j)_{j=K}^\infty\|_{\ell^q}\approx_{\mu_0,p}\|(a_j)_{j=K}^\infty\|_{\ell^q}=\infty.
    \end{align*}

    We conclude that $T(f_{s,a,y})\notin\Fs_{pq}^s(\R^n)$ as desired, finishing the proof.
\end{proof}

\appendix
\section{Definition of Interpolation Spaces}\label{Section::DefInt}
\renewcommand{\thethm}{\Alph{thm}}\setcounter{thm}{0}

To include the cases $p,q<1$ for Besov and Triebel-Lizorkin spaces, we work on quasi-Banach spaces instead of Banach spaces.

A standard formulation of interpolation spaces is regarded as an image object of some interpolation functor. See also for example \cite[Chapter~2.4]{BerghLofstromInterpolation}.

Here we let $\Cf_1$ be the category of (complex) quasi-Banach spaces with morphisms being bounded linear maps. 

We let $\Cf_2$ be the category of \textit{compatible tuples} of  (complex) quasi-Banach spaces:
\begin{itemize}
    \item $\operatorname{Ob}\Cf_2$ consists of all pair of quasi-Banach spaces $(X_0,X_1)$ such that the sum space $X_0+X_1$ is a well-defined quasi-Banach space. Such $(X_0,X_1)$ is called a compatible quasi-Banach tuple.
    \item The hom set $\operatorname{Hom}_{\Cf_2}((X_0,X_1),(Y_0,Y_1))$ consists of all bounded linear map $T:X_0+X_1\to Y_0+Y_1$ such that $T|_{X_i}:X_i\to Y_i$ is bounded linear for $i=0,1$. We also call such $T$ an \textit{admissible operator} from $(X_0,X_1)$ to $(Y_0,Y_1)$.
\end{itemize} 
\begin{defn}
    An \textit{interpolation functor} is a functor $\Ff:\Cf_2\to \Cf_1$ such that 
    \begin{itemize}
        \item For every $(X_0,X_1)\in\operatorname{Ob}\Cf_2$, $X_0\cap X_1\subseteq\Ff(X_0,X_1)\subseteq X_0+X_1$, with both set inclusions being embeddings.
        \item For every $(X_0,X_1),(Y_0,Y_1)\in\operatorname{Ob}\Cf_2$ and $T\in \operatorname{Hom}_{\Cf_2}((X_0,X_1),(Y_0,Y_1))$, we have $\Ff(T)=T|_{\Ff(X_0,X_1)}$.
    \end{itemize}

\end{defn}

The classical complex interpolations $[-,-]_\theta$ and real interpolations $(-,-)_{\theta,q}$ for $0<\theta<1$, $0<q\le\infty$ are all interpolation functors. See \cite[Chapters~3 and 4]{BerghLofstromInterpolation}, also \cite[Chapter~3.11]{BerghLofstromInterpolation} for the case $0<q\le1$.

\begin{defn}\label{Defn::FunInt}
Let $\mathfrak{S}\subset\operatorname{Ob}\Cf_1$ be a collection of quasi-Banach spaces, such that $(X_0,X_1)$ are compatible tuples for all $X_0,X_1\in\mathfrak{S}$.

    We say $Y\in\operatorname{Ob}\Cf_1$ is a \textit{(categorical) interpolation space} from $\mathfrak S$, if there is an interpolation functor $\Ff:\Cf_2\to \Cf_1$ and $X_0,X_1\in\mathfrak S$ such that $Y=\Ff(X_0,X_1)$. 
\end{defn}

In this way Corollary~\ref{Cor::Interpo} can be formulated to the following:
\begin{cor}\label{Cor::InterpoReform1}
Let $0<p,q\le\infty$ and $s\in\R$ such that $p\neq q$. There are no interpolation functor $\Ff:\Cf_2\to\Cf_1$ and $0<p_0,p_1,q_0,q_1\le\infty$, $s_0,s_1\in\R$ such that $\Fs_{pq}^s(\R^n)=\Ff(\Bs_{p_0q_0}^{s_0}(\R^n),\Bs_{p_1q_1}^{s_1}(\R^n))$.
    
\end{cor}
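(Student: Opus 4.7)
The plan is a clean contradiction argument that just unpacks the definition of an interpolation functor against Theorem~\ref{Thm::TBdd}. Suppose, for the sake of contradiction, there existed an interpolation functor $\Ff:\Cf_2\to\Cf_1$ together with parameters $0<p_0,p_1,q_0,q_1\le\infty$ and $s_0,s_1\in\R$ satisfying
$$\Fs_{pq}^s(\R^n)=\Ff\bigl(\Bs_{p_0q_0}^{s_0}(\R^n),\,\Bs_{p_1q_1}^{s_1}(\R^n)\bigr).$$

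First I would fix once and for all a concrete sequence $(y_j)_{j=1}^\infty\subset\R^n$ satisfying \emph{both} separation $\inf_{j\neq k}|y_j-y_k|>0$ \emph{and} the polynomial growth $|y_j|\le 2^{j}$ (for instance $y_j=j\cdot e_1$ with $e_1$ a unit vector). By Theorem~\ref{Thm::TBdd}~\ref{Item::TBdd::Ss} the operator $T$ from \eqref{Eqn::DefT} is then a bounded linear endomorphism of $\Ss'(\R^n)$. Since both $\Bs_{p_0q_0}^{s_0}(\R^n)$ and $\Bs_{p_1q_1}^{s_1}(\R^n)$ embed continuously in $\Ss'(\R^n)$, they form a compatible quasi-Banach tuple in the sense of $\Cf_2$. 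Moreover, Theorem~\ref{Thm::TBdd}~\ref{Item::TBdd::Bs} ensures that $T|_{\Bs_{p_iq_i}^{s_i}}:\Bs_{p_iq_i}^{s_i}\to\Bs_{p_iq_i}^{s_i}$ is bounded for $i=0,1$. A standard routine check (any decomposition $f=f_0+f_1$ in the sum space yields $Tf=Tf_0+Tf_1$ with quasi-norms controlled by $\max_i\|T\|_{\Bs_{p_iq_i}^{s_i}\to\Bs_{p_iq_i}^{s_i}}$) gives boundedness on the sum space, so $T$ is an admissible morphism in $\operatorname{Hom}_{\Cf_2}\bigl((\Bs_{p_0q_0}^{s_0},\Bs_{p_1q_1}^{s_1}),(\Bs_{p_0q_0}^{s_0},\Bs_{p_1q_1}^{s_1})\bigr)$.

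Applying the functor $\Ff$, the second bullet in the definition of an interpolation functor forces $\Ff(T)=T|_{\Fs_{pq}^s(\R^n)}$ to be a bounded linear operator $\Fs_{pq}^s(\R^n)\to\Fs_{pq}^s(\R^n)$; in particular $T\bigl(\Fs_{pq}^s(\R^n)\bigr)\subset\Fs_{pq}^s(\R^n)$. This directly contradicts Theorem~\ref{Thm::TBdd}~\ref{Item::TBdd::Fs}, which asserts the non-inclusion for $p\neq q$, and the proof is complete. There is essentially no obstacle here beyond bookkeeping: the real work (producing a single $T$ that is simultaneously tempered-distribution-bounded, Besov-bounded on every triple $(p,q,s)$, and a Triebel-Lizorkin non-endomorphism whenever $p\neq q$) is exactly Theorem~\ref{Thm::TBdd}, which has been tailored so that it instantly defeats any conceivable interpolation functor with a Besov base pair.
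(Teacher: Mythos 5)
Your argument is correct and mirrors the paper's own proof: both take the operator $T$ from Theorem~\ref{Thm::TBdd} as an admissible morphism on the Besov pair, apply the interpolation functor $\Ff$ to conclude $T|_{\Fs_{pq}^s}$ must be bounded, and contradict Theorem~\ref{Thm::TBdd}~\ref{Item::TBdd::Fs}. The only difference is that you make explicit two small checks the paper leaves implicit, namely choosing $(y_j)$ with both separation and controlled growth so that $T$ acts on $\Ss'(\R^n)$, and verifying boundedness on the sum space to confirm admissibility in $\Cf_2$.
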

\begin{proof}
    Suppose they exist. By assumption $\Bs_{p_0q_0}^{s_0}\cap\Bs_{p_1q_1}^{s_1}(\R^n)\subseteq\Fs_{pq}^s(\R^n)\subseteq \Bs_{p_0q_0}^{s_0}+\Bs_{p_1q_1}^{s_1}(\R^n)$.
    
    The operator $T$ in Theorem~\ref{Thm::TBdd} satisfies $T:\Bs_{p_iq_i}^{s_i}(\R^n)\to\Bs_{p_iq_i}^{s_i}(\R^n)$ for $i=0,1$. By assumption of $\Ff$, $\Ff(T):\Fs_{pq}^s(\R^n)\to \Fs_{pq}^s(\R^n)$ must be bounded. However $T|_{\Fs_{pq}^s}=\Ff(T)$ by definition, and $T(\Fs_{pq}^s(\R^n))\not\subset \Fs_{pq}^s(\R^n)$, giving a contradiction.
\end{proof}

Alternatively we can focus on local without traversing all quasi-Banach spaces. For details see e.g. \cite[Chapter~3.1]{BennettSharpleyInterpolation}.

\begin{defn}
    Let $(X_0,X_1)$ be a compatible pair of quasi-Banach spaces. We say $X$ is a \textit{(set-theoretical) interpolation space} of $(X_0,X_1)$, if
    \begin{itemize}
        \item $X_0\cap X_1\subseteq X\subseteq X_0+X_1$, both set inclusions are embeddings.
        \item For every admissible operator $T$ on $(X_0,X_1)$ (i.e. $T:X_0+X_1\to X_0+X_1$ is bounded linear such that $T|_{X_i}:X_i\to X_i$ is also bounded for $i=0,1$), $T|_X:X\to X$ is also bounded.
    \end{itemize}
\end{defn}
\begin{defn}
    Let $\mathscr X$ be a Hausdorff topological space and let $\mathfrak{S}$ be a collection of quasi-Banach spaces $X\subseteq\mathscr X$, such that $X\hookrightarrow\mathscr X$ are all topological embeddings. 
    
    We say $Y$ is a \textit{(set-theoretical) interpolation space} from $\mathfrak S$, if there are $X_0,X_1\in\mathfrak S$ such that $Y$ is a set-theoretical interpolation of $(X_0,X_1)$. 
\end{defn}
In this way Corollary~\ref{Cor::Interpo} can be formulated to the following:
\begin{cor}\label{Cor::InterpoReform2}
    Let $0<p,q\le\infty$ and $s\in\R$ such that $p\neq q$. There are no  $0<p_0,p_1,q_0,q_1\le\infty$, $s_0,s_1\in\R$ such that $\Fs_{pq}^s(\R^n)$ is a set-theoretical interpolation space of $(\Bs_{p_0q_0}^{s_0}(\R^n),\Bs_{p_1q_1}^{s_1}(\R^n))$.
    
\end{cor}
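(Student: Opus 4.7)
The plan is to mimic the contradiction argument used in the proof of Corollary~\ref{Cor::InterpoReform1}, replacing the categorical interpolation functor with the admissible-operator definition of set-theoretical interpolation. Fix $p,q,s$ with $p\neq q$, and suppose for contradiction that there exist $0<p_0,p_1,q_0,q_1\le\infty$ and $s_0,s_1\in\R$ such that $\Fs_{pq}^s(\R^n)$ is a set-theoretical interpolation space of the compatible pair $(\Bs_{p_0q_0}^{s_0}(\R^n),\Bs_{p_1q_1}^{s_1}(\R^n))$; in particular $\Fs_{pq}^s(\R^n)\subseteq \Bs_{p_0q_0}^{s_0}(\R^n)+\Bs_{p_1q_1}^{s_1}(\R^n)$ as embedded quasi-Banach spaces, where the ambient Hausdorff space $\mathscr X$ can be taken to be $\Ss'(\R^n)$.

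Next, I would invoke Theorem~\ref{Thm::TBdd} with a specific shift sequence $(y_j)_{j=1}^\infty\subset\R^n$. To apply part \ref{Item::TBdd::Ss} we need a choice with $\inf_{j\neq k}|y_j-y_k|>0$ \emph{and} $|y_j|\le 2^{N_0 j}$ for some $N_0$; taking e.g.\ $y_j = j\mathsf{e}_1$ satisfies both. Then by \ref{Item::TBdd::Ss} the operator $T:\Ss'(\R^n)\to\Ss'(\R^n)$ is well-defined and bounded, and by \ref{Item::TBdd::Bs} its restriction to each $\Bs_{p_iq_i}^{s_i}$ is bounded into itself.

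The key verification is that $T$ is admissible for the pair $(\Bs_{p_0q_0}^{s_0},\Bs_{p_1q_1}^{s_1})$. Since both summands embed in $\Ss'(\R^n)$, the sum quasi-norm is well-defined via $\|f\|_{X_0+X_1}=\inf\{\|f_0\|_{X_0}+\|f_1\|_{X_1} : f=f_0+f_1\}$. For any decomposition $f=f_0+f_1$, the linearity of $T$ on $\Ss'$ gives $Tf=Tf_0+Tf_1$, and applying part \ref{Item::TBdd::Bs} to each summand together with the quasi-triangle inequality yields $\|Tf\|_{X_0+X_1}\le C(\|f_0\|_{X_0}+\|f_1\|_{X_1})$; taking the infimum over decompositions produces the required bound $\|Tf\|_{X_0+X_1}\le C\|f\|_{X_0+X_1}$. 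I expect this admissibility check to be the only genuinely non-formal step, and it is straightforward precisely because we have arranged $T$ to live on $\Ss'$.

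Finally, by the assumed interpolation property, $T|_{\Fs_{pq}^s(\R^n)}:\Fs_{pq}^s(\R^n)\to\Fs_{pq}^s(\R^n)$ must be bounded; in particular $T(\Fs_{pq}^s(\R^n))\subseteq \Fs_{pq}^s(\R^n)$. This directly contradicts Theorem~\ref{Thm::TBdd}~\ref{Item::TBdd::Fs}, which asserts $T(\Fs_{pq}^s(\R^n))\not\subset \Fs_{pq}^s(\R^n)$ whenever $p\neq q$. The contradiction completes the argument.
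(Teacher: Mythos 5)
Your proposal is correct and follows the same route as the paper: exhibit $T$ as an admissible operator for the pair $(\Bs_{p_0q_0}^{s_0},\Bs_{p_1q_1}^{s_1})$ and contradict Theorem~\ref{Thm::TBdd}~\ref{Item::TBdd::Fs}. The paper simply asserts admissibility, whereas you supply the (correct) verification on the sum space via the infimum over decompositions, together with an explicit choice of $(y_j)$ making $T$ globally defined on $\Ss'(\R^n)$; this is a welcome elaboration, not a different argument.
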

\begin{proof}
    The operator $T$ in Theorem~\ref{Thm::TBdd} is an admissible operator of $(\Bs_{p_0q_0}^{s_0}(\R^n),\Bs_{p_1q_1}^{s_1}(\R^n))$. However $T(\Fs_{pq}^s(\R^n))\not\subset\Fs_{pq}^s(\R^n)$. Therefore by definition $\Fs_{pq}^s(\R^n)$ is not a set-theoretical interpolation space of $(\Bs_{p_0q_0}^{s_0}(\R^n),\Bs_{p_1q_1}^{s_1}(\R^n))$.
\end{proof}

\section{A Short Notes on Structured Banach Spaces and Open Question}\label{Section::StrBana}

In this section we briefly recall the approach in \cite{KunstmannInterpolation} and purpose an open question in this framework. A special thanks to Emiel Lorist for the discussion of this topic.

A \textit{structured Banach space} is a triple $\mathcal X=(X,J,E)$ where $X$ is a Banach space, $E$ is a Banach function space (see \cite[Definition~2.1]{KunstmannInterpolation} for details) and $J:X\to E$ is a linear isometry map. Let $1\le q\le\infty$, we say a bounded linear map $S:X\to X$ is \textit{$\ell^q$-bounded with respect to $(J,E)$}, if   $$\exists C>0\quad\text{such that}\quad\bigg\|\Big(\sum_{k=1}^\infty|JSf_k|^q\Big)^{1/q}\bigg\|_{E}\le C\bigg\|\Big(\sum_{k=1}^\infty|Jf_k|^q\Big)^{1/q}\bigg\|_{E},\qquad\forall f_1,f_2,\dots\in X.$$
For such $S$ we also say that $S:\mathcal X\to\mathcal X$ is $\ell^q$-bounded.

Let $\mathcal X_i=(X_i,J_i,E_i)$, $i=0,1$ be two structured Banach spaces with $(X_0,X_1)$ being a compatible pair. For $1\le q\le\infty$ and $0<\theta<1$, the \textit{$\ell^q$-interpolation space} $(\mathcal X_0,\mathcal X_1)_{\theta,\ell^q}$ is a subspace of $X_0+X_1$ given by (see \cite[Definition~2.1 and Theorem~3.1]{KunstmannInterpolation})
\begin{equation*}
    \|f\|_{(\mathcal X_0,\mathcal X_1)_{\theta,\ell^q}}:=\inf_{\substack{(f_j)_{j\in\Z}\subset X_0\cap X_1\text{ such that }\\f=\sum_{j\in\Z}f_j\text{ converges in }X_0+X_1}}\big(\big\|(2^{-\theta j}J_0f_j)_{j\in\Z}\big\|_{E_0(\ell^q)}+\big\|(2^{(1-\theta)j}J_1f_j)_{j\in\Z}\big\|_{E_1(\ell^q)}\big).
\end{equation*}

The corresponding interpolation theorem \cite[Theorem~2.11]{KunstmannInterpolation} says that for structured Banach spaces $\mathcal X_i=(X_i,J_i,E_i)$, $\mathcal Y_i=(Y_i,K_i,F_i)$ ($i=0,1$) such that $(X_0,X_1)$, $(Y_0,Y_1)$ are compatible, if $S:X_0+X_1\to Y_0+Y_1$ is a linear operator such that $S:\mathcal X_i\to \mathcal Y_i$ is $\ell^q$-bounded, then $S:(\mathcal X_0,\mathcal X_1)_{\theta,\ell^q}\to(\mathcal Y_0,\mathcal Y_1)_{\theta,\ell^q}$ is bounded for $0<\theta<1$.

\begin{rmk}\label{Rmk::StrBana::Exam}
    The following are typical examples of structured Banach spaces:
\begin{enumerate}[(i)]
    \item The Bessel potential $(I-\Delta)^{s/2}:H^{s,p}(\R^n)\to L^p(\R^n)$ is isomorphism for $s\in\R$ and $1<p<\infty$. Therefore $(H^{s,p}(\R^n),(I-\Delta)^{s/2},L^p(\R^n))$ defines a structured Banach space. Similarly for the homogeneous case $(\dot H^{s,p}(\R^n),(-\Delta)^{s/2},L^p(\R^n))$ is also a structured Banach space.
    \item\label{Item::StrBana::Exam::Besov} For $s\in\R$ and a Littlewood-Paley family $\phi=(\phi_j)_{j=0}^\infty$, define 
    $$J^s_\phi f(j,x):=2^{js}\phi_j\ast f(x),\qquad(j,x)\in\Z_{\ge0}\times\R^n.$$ Then for $1\le p,q\le\infty$, $(\Bs_{pq}^s(\R^n),J^s_\phi,\ell^q(\Z_{\ge0};L^p(\R^n)))$ is a structured Banach space.
    
Indeed $\ell^q(\Z_{\ge 0};L^p(\R^n))$ is a Banach function space over $\Z_{\ge0}\times\R^n$ equipped with the natural product of counting and Lebesgue measures.
\end{enumerate}
\end{rmk}

Let $1<p<\infty$, $1\le q\le\infty$ and $s\in\R$. By \cite[Proposition~5.1]{KunstmannInterpolation} with $A=(-\Delta)^{\frac s2}$ we get for $0<\theta<1$, $\big((L^p(\R^n),I,L^p),(\dot H^{s,p}(\R^n),(-\Delta)^{\frac s2},L^p)\big)_{\theta,\ell^q}=\dot\Fs_{pq}^{\theta s}(\R^n)$, which is a $\ell^q$-interpolation on homogeneous Triebel-Lizorkin space. By simple modification from its proof one can show that:
\begin{equation}\label{Eqn::StrBana::FfromB}
    \Big(\big(L^p(\R^n),I,L^p(\R^n)\big),\big( H^{s,p}(\R^n),(I-\Delta)^{\frac s2},L^p(\R^n)\big)\Big)_{\theta,\ell^q}=\Fs_{pq}^{\theta s}(\R^n),\quad 0<\theta<1.
\end{equation}
We leave the proof of \eqref{Eqn::StrBana::FfromB} to readers.

In our case $T$ in Theorem~\ref{Thm::TBdd} is not bounded in $\Fs_{pq}^s$ when $p\neq q$. As a result:
\begin{lem}
    The $T$ defined in Theorem~\ref{Thm::TBdd} is not $\ell^q$-bounded on $(H^{s,p}(\R^n),(I-\Delta)^{s/2},L^p(\R^n))$ for every $1\le q\le\infty$, $1<p<\infty$ and $s\in\R$, unless $p=q=2$.
\end{lem}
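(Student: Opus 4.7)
The plan is to reduce $\ell^q$-boundedness on the structured space $(H^{s,p}(\R^n),(I-\Delta)^{s/2},L^p(\R^n))$ to $\ell^q$-boundedness on the trivial structured space $(L^p(\R^n),I,L^p(\R^n))$, and then argue by contradiction along two routes, depending on whether $p\neq q$ or $p=q\neq 2$.

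First I would note that $T$ is a Fourier multiplier operator by \eqref{Eqn::FourierMultiplier}, and so commutes with $(I-\Delta)^{s/2}$ on $\Ss'(\R^n)$. Substituting $g_k=(I-\Delta)^{s/2}f_k$ in the defining inequality of $\ell^q$-boundedness on $(H^{s,p},(I-\Delta)^{s/2},L^p)$, and using that $(I-\Delta)^{s/2}:H^{s,p}\to L^p$ is an isometric isomorphism, one sees that this boundedness is equivalent to $\ell^q$-boundedness of $T$ on $(L^p,I,L^p)$, i.e., to boundedness of $T\otimes I$ on $L^p(\R^n;\ell^q)$. In particular the condition is independent of $s$, so an assumed $\ell^q$-boundedness at one $s$ yields the same for all $s'\in\R$.

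Assume first $p\neq q$ and that $T$ is $\ell^q$-bounded on $(H^{s,p},(I-\Delta)^{s/2},L^p)$. Fix any $s_1\neq 0$; by the previous paragraph $T$ is $\ell^q$-bounded on both $(L^p,I,L^p)$ and $(H^{s_1,p},(I-\Delta)^{s_1/2},L^p)$. The structured interpolation theorem \cite[Theorem~2.11]{KunstmannInterpolation}, combined with \eqref{Eqn::StrBana::FfromB}, then forces $T:\Fs_{pq}^{\theta s_1}(\R^n)\to\Fs_{pq}^{\theta s_1}(\R^n)$ to be bounded for every $0<\theta<1$. This directly contradicts Theorem~\ref{Thm::TBdd}\ref{Item::TBdd::Fs}.

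For the remaining case $p=q\neq 2$ the interpolation route fails, since $\Fs_{pp}^r=\Bs_{pp}^r$ and $T$ is bounded on Besov spaces by Theorem~\ref{Thm::TBdd}\ref{Item::TBdd::Bs}. Instead I would invoke Fubini to see that $L^p(\R^n;\ell^p)\cong\ell^p(L^p(\R^n))$ isometrically, so $\ell^p$-boundedness on $(L^p,I,L^p)$ collapses to the scalar inequality $\|Tf\|_{L^p}\lesssim\|f\|_{L^p}$. But $L^p(\R^n)=\Fs_{p2}^0(\R^n)$ and $p\neq 2$, so Theorem~\ref{Thm::TBdd}\ref{Item::TBdd::Fs} shows $T$ is not bounded on $L^p$, completing the contradiction. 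The only delicate point in the whole argument is justifying the commutativity $T(I-\Delta)^{s/2}=(I-\Delta)^{s/2}T$ at the level of $\Ss'(\R^n)$, but this is immediate once one notes that both symbols are in $C^\infty_\loc$ and of polynomial growth, so neither Fourier multiplier has any domain subtlety.
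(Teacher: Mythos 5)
Your proof is correct, and it takes a genuinely different route from the paper's. The paper argues as follows: for $p\neq 2$, ordinary (scalar) unboundedness of $T$ on $H^{s,p}=\Fs_{p2}^s$ already defeats $\ell^q$-boundedness (by testing with a sequence $(f,0,0,\dots)$); for the remaining case $p=2\neq q$, the paper constructs explicit vector-valued counterexamples $\vec f_{a,u}=(a_k\tau_{-u_k}(\chi e_k))_k$ that mirror the scalar construction in the proof of Theorem~\ref{Thm::TBdd}~\ref{Item::TBdd::Fs}, with the choices of $a$ and $u$ depending on whether $q<2$ or $q>2$. You instead observe that $T$ and $(I-\Delta)^{s/2}$ commute as Fourier multipliers, so $\ell^q$-boundedness on $\big(H^{s,p},(I-\Delta)^{s/2},L^p\big)$ is equivalent, for every $s$, to $\ell^q$-boundedness on $(L^p,I,L^p)$; this makes the question $s$-independent and lets you run two clean contradictions. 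For $p\neq q$ you feed the two endpoints $s=0$ and $s=s_1\neq0$ into Kunstmann's interpolation theorem together with \eqref{Eqn::StrBana::FfromB}, producing boundedness of $T$ on $\Fs_{pq}^{\theta s_1}$, which contradicts Theorem~\ref{Thm::TBdd}~\ref{Item::TBdd::Fs}. For $p=q\neq2$ the interpolation route degenerates (it only reaches $\Fs_{pp}^r=\Bs_{pp}^r$, on which $T$ \emph{is} bounded), and you correctly fall back on Fubini: $L^p(\ell^p)\cong\ell^p(L^p)$ collapses $\ell^p$-boundedness to scalar $L^p$-boundedness, which fails since $L^p=\Fs_{p2}^0$ and $p\neq2$. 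The trade-off: your argument is conceptually cleaner, exposes the $s$-independence, and re-uses the paper's own interpolation machinery; but it is less self-contained than the paper's direct construction because it leans on \eqref{Eqn::StrBana::FfromB}, which the paper states without proof, and on the unproven commutativity of the two multiplier operators on $\Ss'$ (true, and easy given the polynomial growth of the symbols, but worth a sentence). Both approaches ultimately reduce to the same engine, Theorem~\ref{Thm::TBdd}~\ref{Item::TBdd::Fs}.
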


\begin{proof}[Sketch]
    Recall from Remark~\ref{Rmk::WspHsp}, $T$ is unbounded on $H^{s,p}$ unless $p=2$. It remains to prove the case $p=2\neq q$. 
    
    Define $\vec f_{a,u}:=(a_k\tau_{-u_k})_{k=1}^\infty$ for $a=(a_k)_{k=1}^\infty\subset\R$ and $(u_k)_{k=1}^\infty\subset\R^n$. Clearly $\|(I-\Delta)^{\frac s2}\vec f_{a,u}\|_{L^2(\ell^q)}\approx_s\|\vec f\|_{L^2(\ell^q)}$.
    When $q<2=p$, take $a\in\ell^2\backslash\ell^q$ and $u_k\equiv y_k$, we see that $\|\vec f\|_{L^2(\ell^q)}\lesssim\|a\|_{\ell^2}$ but $\|T\vec f\|_{L^2(\ell^q)}\gtrsim\|a\|_{\ell^q}=\infty$.
    When $q>2=p$, take $a\in\ell^q\backslash\ell^2$ and $u_k\equiv0$, we see that $\|\vec f\|_{L^2(\ell^q)}\lesssim\|a\|_{\ell^q}$ but $\|T\vec f\|_{L^2(\ell^q)}\gtrsim\|a\|_{\ell^2}=\infty$.
\end{proof}

It is natural to ask whether we can get a general interpolation result to \eqref{Eqn::StrBana::FfromB} that obtains Triebel-Lizorkin spaces:
\begin{ques}\label{Ques::StrBana}
    Let $0<p,q\le\infty$, $s_0\neq s_1$ and $0<\theta<1$. Can we find Banach function spaces $E_j=E_j(p,q,s_0,s_1,\theta)$ and isometric mapping $J_j=J_j(p,q,s_0,s_1,\theta):\Bs_{pp}^{s_j}(\R^n)\to E_j$ for $j=0,1$ such that the following holds?
    \begin{equation*}
        \Big(\big(\Bs_{pp}^{s_0}(\R^n),J_0,E_0\big),\big( \Bs_{pp}^{s_1}(\R^n),J_1,E_1\big)\Big)_{\theta,\ell^q}=\Fs_{pq}^{\theta s}(\R^n),\quad 0<\theta<1.
    \end{equation*}

    If such $J_0,J_1,E_0,E_1$ do not exist, can we still obtain $\Fs_{pq}^{\theta s}(\R^n)$ from the framework of sequential structures given in \cite{LindermulderLoristInterpolation}?

    If such $J_0,J_1,E_0,E_1$ always exist, can we make $(J_j,E_j)$ depend only on $p$ and $s_j$?
\end{ques}

Notice that in both \cite{KunstmannInterpolation} and \cite{LindermulderLoristInterpolation} only Banach spaces are discussed. One may need to give slight modification so that they work for quasi-Banach spaces when $p$ or $q<1$.

If one wants a positive answer to Question~\ref{Ques::StrBana}, the structure $J^s_\phi$ in Remark~\ref{Rmk::StrBana::Exam}~\ref{Item::StrBana::Exam::Besov} does not work.
\begin{lem}
    For $1\le p,q,r\le\infty$, $T$ is $\ell^r$-bounded with respect to the structure $(\Bs_{pq}^s,J^s_\phi,\ell^q(L^p))$. 
    
    In particular, for every $r\in[1,\infty]$, elements in $\{\Fs_{pq}^s(\R^n):s\in\R,\ p,q\in[1,\infty],\ p\neq q\}$ cannot be $\ell^r$-interpolated from the structured Besov spaces $\{(\Bs_{pq}^s,J^s_\phi,\ell^q(L^p)):s\in\R,\ p,q\in[1,\infty]\}$.
\end{lem}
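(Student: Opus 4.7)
The strategy is to recycle the scalar proof of Theorem~\ref{Thm::TBdd}~\ref{Item::TBdd::Bs}: each translation $\tau_{y_l}$ acts diagonally in the index $k$ and is an isometry on $L^p$, so the three-line computation in \eqref{Eqn::Bsbdd::MainIneqn} upgrades verbatim once we replace $|\phi_j\ast f|$ by the fiberwise $\ell^r$-square-function $\bigl(\sum_k|\phi_j\ast Tf_k|^r\bigr)^{1/r}$. Concretely, I fix $(f_k)_{k\ge1}\subset \Bs_{pq}^s(\R^n)$. The support condition on $\hat\phi$ yields
$$\phi_j\ast Tf_k=\sum_{l=j-1}^{j+1}\tau_{y_l}(\phi_j\ast\phi_l\ast f_k),$$
so the $\ell^r_k$ triangle inequality (valid since $r\ge 1$) gives
$$\Bigl(\sum_k|\phi_j\ast Tf_k(x)|^r\Bigr)^{1/r}\le\sum_{l=j-1}^{j+1}\tau_{y_l}\Bigl(\sum_k|\phi_j\ast\phi_l\ast f_k|^r\Bigr)^{1/r}(x).$$

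Taking the $L^p_x$-norm eliminates $\tau_{y_l}$ by translation invariance, and the standard vector-valued convolution bound (Minkowski's integral inequality in $\ell^r_k$ followed by Young's inequality in $L^p_x$, using $\|\phi_j\|_{L^1}\lesssim_\phi 1$) produces
$$\Bigl\|\Bigl(\sum_k|\phi_j\ast\phi_l\ast f_k|^r\Bigr)^{1/r}\Bigr\|_{L^p}\lesssim\Bigl\|\Bigl(\sum_k|\phi_l\ast f_k|^r\Bigr)^{1/r}\Bigr\|_{L^p}.$$
Weighting by $2^{js}$, taking the $\ell^q_j$-norm, and reindexing $j\leftrightarrow l$ via $2^{js}\le 2^{|s|+1}2^{ls}$ exactly as in \eqref{Eqn::Bsbdd::MainIneqn} yields the required $\ell^r$-bound with respect to $(J^s_\phi,\ell^q(L^p))$.

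For the ``in particular'' claim, I argue by contradiction. Suppose some $\Fs_{pq}^s(\R^n)$ with $p\ne q$ arose as the $\ell^r$-interpolation $(\mathcal B_0,\mathcal B_1)_{\theta,\ell^r}$ of two structured Besov spaces $\mathcal B_i=(\Bs_{p_iq_i}^{s_i},J^{s_i}_\phi,\ell^{q_i}(L^{p_i}))$. The first half of the lemma shows that the operator $T$ of Theorem~\ref{Thm::TBdd} is $\ell^r$-bounded on each $\mathcal B_i$, so \cite[Theorem~2.11]{KunstmannInterpolation} forces $T:\Fs_{pq}^s(\R^n)\to\Fs_{pq}^s(\R^n)$ to be bounded, contradicting Theorem~\ref{Thm::TBdd}~\ref{Item::TBdd::Fs}.

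The only delicate point is keeping the three norms $\ell^r_k$, $L^p_x$, $\ell^q_j$ in the correct order: $J^s_\phi$ forces us to evaluate $\ell^r_k$ pointwise in $(j,x)$, then take $L^p_x$, and only finally $\ell^q_j$. Both Minkowski (which is why we need $r\ge 1$) and the diagonal-in-$k$ action of $\tau_{y_l}$ respect this ordering, so no further care is needed beyond mirroring the proof of part~\ref{Item::TBdd::Bs}.
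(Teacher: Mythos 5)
Your proposal is correct and follows essentially the same route as the paper: the paper reformulates $\ell^r$-boundedness as boundedness of $T$ on the $\ell^r$-valued Besov space $\Bs_{pq}^s(\R^n;\ell^r)$, establishes $\|\phi_j\ast\phi_l\ast\vec f\|_{L^p(\ell^r)}\lesssim\|\phi_l\ast\vec f\|_{L^p(\ell^r)}$ via vector-valued Young (which is exactly your Minkowski-then-Young step), and repeats \eqref{Eqn::Bsbdd::MainIneqn}, while the contradiction via Kunstmann's interpolation theorem and Theorem~\ref{Thm::TBdd}~\ref{Item::TBdd::Fs} is identical. Your write-up simply makes explicit the steps the paper leaves implicit.
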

This is likely to be true for $p,q$ or $r<1$, provided that we have a version interpolation for structured quasi-Banach spaces.
\begin{proof}
    Indeed $T$ is $\ell^r$-bounded in $(\Bs_{pq}^s,J^s_\phi,\ell^q(L^p))$ if and only if $T:\Bs_{pq}^s(\R^n;\ell^r)\to\Bs_{pq}^s(\R^n;\ell^r)$.

    On the other hand, we have $\|\phi_j\ast\phi_k\ast\vec f\|_{L^p(\ell^r)}\le\|\phi_k\ast \vec f\|_{L^p(\ell^r)}$ for every $\vec f\in\Ss'(\R^n;\ell^r)$, since $\|\phi_j\ast\vec g\|_{L^p(\ell^r)}\le\|\vec g\|_{L^p(\ell^r)}$ holds via Young's inequality. The boundedness $T$ on $\Bs_{pq}^s(\ell^r)$ then follows from repeating \eqref{Eqn::Bsbdd::MainIneqn} with $f$ replacing by $\vec f$.

    Let $s\in\R$ and $p,q\in[1,\infty]$ such that $p\neq q$. Suppose by contrast that there are $s_0,s_1\in\R$, $p_0,p_1,q_0,q_1,r\in[1,\infty]$ and $\theta\in(0,1)$ such that $\Fs_{pq}^s=((\Bs_{p_0q_0}^{s_0},J^{s_0}_\phi),(\Bs_{p_1q_1}^{s_1},J^{s_1}_\phi))_{\theta,\ell^r}$, then $T$ will be bounded on $\Fs_{pq}^s$. This  contradicts to the Theorem~\ref{Thm::TBdd}~\ref{Item::TBdd::Fs}.
\end{proof}

\begin{ack}
    The author thanks Jan Lang for suggesting an ongoing collaboration project that led to this paper. The author thanks Emiel Lorist and Winfried Sickel for some helpful discussion.

    The author also acknowledge the travel funding from the AIM Fourier restriction community.
\end{ack}
\bibliographystyle{amsalpha}
\small\bibliography{reference}
\end{document}